\documentclass[reqno,times,twoside, 11pt]{amsart}

\usepackage{hyperref}
\usepackage{enumerate,fullpage}
\usepackage{amssymb,amsmath,graphicx,amsthm}
\usepackage{color}
\pagestyle{plain}

\usepackage{anysize}
\marginsize{1in}{1in}{1in}{1in}

\allowdisplaybreaks

\newcommand{\ba}{\begin{eqnarray}}
\newcommand{\ea}{\end{eqnarray}}

%
%  A more compact itemize environment.
%

%\pagestyle{myheadings}
%\font\eightrm = cmr10 at 8 pt
\usepackage{amsfonts,amsmath,amsthm,mathrsfs}
\usepackage{amssymb}
\usepackage{color}
\usepackage{enumerate}
\usepackage{graphics,verbatim,url}
\usepackage{index}

\newcommand{\A}{\mathcal{A}}

\newcommand{\be}{\begin{eqnarray*}}
	\newcommand{\en}{\end{eqnarray*}}
\newcommand{\bes}{\begin{eqnarray}}
	\newcommand{\ens}{\end{eqnarray}}

\newcommand{\lf}{\left}
\newcommand{\rt}{\right}

\usepackage{epic, curves}
\def\nn{\nonumber}
\def  \kg {L^\infty\lf(0,T; 		 \mathcal{Z}_{\gamma, TA_1 }(\Omega)\rt)}
\newcommand{\al}{\alpha}

\newcommand{\ep}{\epsilon}
\newcommand{\bn}{ \beta_{\bf n} }

\newtheorem{theorem}{Theorem}[section]

\newtheorem{corollary}{Corollary}[section]
\newtheorem{definition}{Definition}[section]
\newtheorem{lemma}{Lemma}[section]

\newtheorem{remark}{Remark}[section]

%\def\End{\hspace{0.7cm} $\Box$}
%\journal{J. Fuzzy Sets and Systems}

\def\bq{\begin{equation}}
	\def\eq{\end{equation}}
\def\bqq{\begin{eqnarray*}}
	\def\eqq{\end{eqnarray*}}

\def\nn{\nonumber}

	\begin{document}
	\title{ On a backward problem for multidimensional  Ginzburg-Landau equation with  random data  }         % Enter your title between curly braces
	\author{Mokhtar Kirane, Erkan Nane  and Nguyen Huy Tuan}          % Enter your name between curly braces
	\address{M. Kirane}
	\address{LaSIE, Facult\'{e} des Sciences et Technologies, Universi\'{e} de La Rochelle,  Avenue M. Cr\'{e}peau, 17042 La Rochelle, France}
	\email{mokhtar.kirane@univ-lr.fr }
	\address{E. Nane}
	\address{Department of Mathematics and Statistics, Auburn University, Auburn, USA }
	\email{ezn0001@auburn.edu }
	\address{N.H. Tuan}
	\address{Department of Mathematics and Informatics,  University of Science, Viet Nam National University VNU-HCMC, Viet Nam }
	\email{nguyenhuytuan@tdt.edu.vn or thnguyen2683@gmail.com  }
\maketitle

		\begin{abstract}
		In this paper, we consider a backward in time problem for Ginzburg-Landau equation in multidimensional domain associated with some random data. The problem is ill-posed in the sense of Hadamard. To regularize the instable solution, we develop a new regularized method combined with  statistical approach to solve this  problem. We prove a upper  bound,  on the rate of convergence of the mean integrated squared error in $L^2 $ norm and $H^1$ norm.
		\end{abstract}

%		\tableofcontents
		
		\section{Introduction}

		In this paper we consider the backward  problem of finding ${\bf u}({\bf x},0 )$ for Ginzburg-Landau equation
		\begin{equation}
		\label{problem555}
		\left\{\begin{array}{l l l}
		{\bf u}_t - \Lambda(t) \Delta  	{\bf u} & = 	{\bf u}-{\bf u}^3+G(	{\bf x},t), & \qquad (	{\bf x},t) \in \Omega\times (0,T),\\
		{\bf u}(	{\bf x},t)&=0, & \qquad 	{\bf x} \in \partial {\Omega},\\
		{\bf u}(	{\bf x},T) & = H(	{\bf x}), & \qquad 	{\bf x} \in {\Omega},
		\end{array}
		\right.
		\end{equation}
		where $\Lambda \in C([0,T])$
		and $H$ is  in $L^2(\Omega)$.
		Here  the domain $\Omega=(0,\pi)^d$ is a   subset of  $\mathbb{R}^d$ and { ${\bf x}:=(x_1,...x_d)$.}    The function  $G$ is called the source function that satisfies the usual Lipschitz  continuity and growth conditions. The function $H$ is given and  is often called   a final value data.  The Ginzburg-Landau  equation has been applied
	in various areas in physics, including phase
	transitions in non-equilibrium systems, instabilities in hydrodynamic systems, chemical turbulence, and thermodynamics (see \cite{Ames,Ka}).

	 It is well-known that  the backward in time   problem for Ginzburg-Landau equation  is severely ill--posed in the sense of Hadamard \cite{Ames} and \cite{hada}.  Hence  solutions do not always exist, and in the case of existence,
		the solutions  do not  depend continuously on the   data.
		In \cite{Ames}, the author considered continuous dependence of the solutions    on the parameter of
		a Ginzburg--Landau equation.
		 In fact, from small noise contaminated physical measurements of $H$ and $G$,
		the corresponding solutions might have large errors. In practice, if we  measure the function
		$H({\bf x})$ and $ G({\bf x},t)$ at fixed points ${\bf x}_{\bf i }$
		with index  ${\bf i}=(i_1,i_2,...i_d) \in \mathbb{N}^d, 1 \le i_k \le n_k $ for $k=\overline{1,d}$ where
		\begin{equation} \label{model1}
		{\mathbf x}_{\bf i }=( x_{i_1},...x_{i_d})=\Big(\frac{\pi(2i_1-1)}{2n_1}, \frac{\pi(2i_2-1)}{2n_2},...\frac{\pi(2i_d-1)}{2n_d}  \Big),\quad i_k= \overline {1,n_k},\quad k=\overline {1,d}.
		\end{equation}
		then we obtain a set of values
		\begin{equation*}
			\widetilde D_{\bf i}=\widetilde D_{i_1,i_2,...i_d} \approx  H	( x_{i_1},...x_{i_d}),~~	\widetilde G_{\bf i}(t)=\widetilde G_{i_1,i_2,...i_d}(t) \approx G	( x_{i_1},...x_{i_d},t).
		\end{equation*}
	The points  $ 	{\mathbf x}_{\bf i } , {\bf i}=(i_1,i_2,...i_d),~\quad i_k= \overline {1,n_k},\quad k=\overline {1,d} $ are called (non-random) design points. The real measurements are always observed with errors.
		If the errors are generated from uncontrollable sources  such as wind, rain, humidity, etc, then the model is random. 	We consider the following   nonparametric regression model of  data as follows
		\begin{align} \label{model2}
		\widetilde D_{\bf i}=\widetilde D_{i_1,i_2,...i_d}:&= H	( x_{i_1},...x_{i_d})+ \Lambda_{i_1,i_2,...i_d} \Upsilon_{i_1,i_2,...i_d}=H({\bf x}_{\bf i })+\Lambda_{\bf i}\Upsilon_{\bf i}
		\end{align}
		\begin{align} \label{model3}
		\widetilde G_{\bf i}(t)=\widetilde G_{i_1,i_2,...i_d}(t):&=G	( x_{i_1},...x_{i_d},t)+\vartheta  \Psi_{i_1,i_2,...i_d}(t)=G({\bf x}_{\bf i },t)+\vartheta  \Psi_{\bf i}(t),
		\end{align}
		for $ i_k= \overline {1,n_k},\quad k=\overline {1,d}$.   Here $\Upsilon_{\bf i}:=\Upsilon_{i_1,i_2,...i_d} \sim \mathcal{N} (0,1)$  and $\Psi_{\bf i}(t):=\Psi_{i_1,i_2,...i_d}(t)$ are Brownian motions. {  Here $\Lambda_{i_1,i_2,...i_d}$ and $\vartheta$ are  positive constants which are bounded by a positive constant $V_{max}$.} We assume furthermore that they are  mutually independent.
		
	A well known fact is that, when  the "noise"   in these models  are
		modeled as a random quantity, the convergence of estimators  $\widetilde {\bf u}({\bf x},0)$ of  ${\bf u}({\bf x},0)$ should be studied
		by  statistical methods.  Methods applied to  the deterministic cases cannot  be applied directly for this case.   The main idea in using  the random noise is of finding suitable  estimators
		$\widetilde {\bf u}({\bf x},0)$ and to consider the expected square error $\mathbb{E} \Big[\|\widetilde {\bf u}({\bf x},0) - {\bf u}({\bf x},0)\|^2_{L^2}\Big]$ in a suitable space, also  called the mean integrated square error (MISE).

 The inverse problem with random noise has a long history.
		 The
	  backward  problem  for linear parabolic equation is a special form of statistical inverse problems and it can be transform  by a linear operator with random noise
		\begin{equation}
			u_T=K u_0 + \text{"noise"}.  \label{K}
		\end{equation}
		where $K$ is a bounded linear operator that  does not have a continuous inverse.
		The latter model  implies that
	some  well-known methods  including spectral cut-off (or called truncation method)  \cite{Bi2,Cavalier,Mair,Hohage,Trong},  the Tiknonov method \cite{Cox}, iterative regularization methods \cite{Engl} can be used.

		In this paper, since the model in \eqref{problem555} is nonlinear, we can not transform it into the operator  defined  in equation \eqref{K}.
		This makes the considering the nonlinear problem \eqref{problem555} is more challenging. Another  difficulty arises when
	  $\Lambda$ is noisy by random observation
		\begin{equation}
		\Lambda_\ep(t)= \Lambda(t)+ \ep \overline  {\xi}(t), \label{Lamb}
		\end{equation}
		where $\ep$ is deterministic noise level and $\overline  {\xi}(t)$  is Brownian motion.  If $\Lambda(t)$ is a constant (independent of $t$) then we can apply    well-known  methods such as the  spectral method (see section 3 \cite{Tuan3})  for solving the problem \eqref{problem555}. However, when  $\Lambda$ depends on $t$ and is noisy  as in equation \eqref{Lamb}, the problem is more challenging. It is not possible to  approximate the solution of problem \eqref{problem555} using the spectral method.   	Until now, to the best of the authors' knowledge,  there
		does not exist any results  for approximating the solution of the Problem  \eqref{problem555} with the random model  \eqref{model2},\eqref{model3}, \eqref{Lamb}.  This is  our motivation in the  present paper.
		
		Our main goal in this paper is to provide  a new regularized method to give  a regularized solution that is  called estimators  for approximating ${\bf u}({\bf x},t),~0\le t <T$.
The backward problem  for Ginzburg--Landau equation  with random data has  not been  studied before.
Our main idea in this paper is that of  applying  a modified  Quasi-reversibility method as in  Lions \cite{Lion}. First, we approximate $H$ and $G$ by the  approximating functions $\widehat H_{\beta_{\bf n}}$ and $\widehat G_{\beta_{\bf n}}$  that are  defined in Theorem \eqref{theorem2.1}.
Next, our task is  of
finding the approximating operator for  $  \Lambda(t) \Delta$.  We will not  approximate directly the time dependent operator $\A(t)=\Lambda(t) \Delta$ as introduced in \cite{Lion}.
We introduce  a new approach by
giving  the unbounded time independent operator ${\bf P}$  that is defined in Lemma \eqref{lem3.1}.  Then, we  approximate   ${\bf P}$
by a bounded operator ${\bf P_{\rho_n}}$,  in order to establish an approximation for  the regularized problem \ref{regu333}.  Here  $\beta_n $  satisfies that   $\lim_{n\to \infty} \beta_n
=+\infty$, and we choose
$\rho_{ n}$  that depends on $\beta_n$ suitably  to obtain the convergence rate.
 In contrast to the initial  value problem, for  the final value (inverse) problem, we  need to assume that  the  problem \eqref{problem555} has a unique solution ${\bf u}$.   In particular,   the main purpose in  our error estimates is to show that  the norm of difference between the regularized solution of the problem \eqref{regu333} and the  solution  of the problem \eqref{problem555}   in $L^2(\Omega)$ and $H^1(\Omega)$  tends to zero when $ |{\bf n}|=\sqrt{n_1^2+...+n_d^2} \to +\infty$.

\iffalse
{ \color{blue}	The novelty of our  paper is described as follows:
Our main idea in this case is that of  applying  a modified  Quasi-reversibility method given by Lions\cite{Lion}. First, we approximate $H$ and $G$ by the  approximating functions $\widehat H_{\beta_{\bf n}}$ and $\widehat G_{\beta_{\bf n}}$  which defined in Theorem \eqref{theorem2.1}.   	
Next, our task is  of
finding the approximating operator for  $  \Lambda(t) \Delta$.  We will not  approximate directly the time dependents operator $\A(t)$ as introduced in \cite{Lion}.
We give a new approach by
give the unbounded time independent operator ${\bf P}$  which is defined in Lemma \eqref{lem3.1}.  Then, we  approximate   ${\bf P}$
by a bounded operator ${\bf P_{\rho_n}}$,  in order to establish  the regularized problem.  Here  $\beta_n $ is first chosen  satisfies that   $\lim_{n\to \infty} \beta_n
=+\infty$, then we choose
$\rho_{ n}$ depend on $\beta_n$ for suitable to obtain the convergence rate.
 Different to initial problem, for  inverse initial problem, we  need to assume that  the  problem \eqref{problem555} has a unique solution ${\bf u}$.   In particular,   the main purpose in  our error estimates is to show that  the norm of difference between the regularized solution and the  solution  of the problem \eqref{problem5555}   in $L^2(\Omega)$ and $H^1(\Omega)$  tends to zero when $ |{\bf n}|=\sqrt{n_1^2+...+n_d^2} \to +\infty$.
 	
}	
		
	\fi

		\section{Constructing a function from discrete random data}
		\setcounter{equation}{0}
		In this section, we develop  a new theory for constructing a function in $L^2(\Omega)$ from the  given discrete random data.
		
		{	We first  introduce notation, and then we state the main results of this paper.

We will occasionally use the following  Gronwall's inequaly in this paper.
			\begin{lemma}
			Let $ b: [0,T] \to \mathbb{R}^+ $ be a continuous function  and $ C,D >0$ be constants that are  independent of $t$,  such that
			\begin{equation*}
			b(t) \le C+ D \int_{t}^T b(\tau)d\tau,\ \ \ t>0.
			\end{equation*}
			Then we have
			\begin{equation*}
			b(t) \le C e^{D(T-t)}.
			\end{equation*}
			\end{lemma}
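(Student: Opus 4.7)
The plan is to reduce this backward-in-time Gronwall inequality to an ODE differential inequality for the integral of $b$, solve it with an integrating factor, and substitute back. The backward direction (integral from $t$ to $T$ rather than from $0$ to $t$) is what distinguishes it from the standard Gronwall, but the reduction is parallel.

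First I would introduce the auxiliary function
\begin{equation*}
F(t) := \int_t^T b(\tau)\, d\tau, \qquad t \in [0,T],
\end{equation*}
which is continuously differentiable with $F'(t) = -b(t)$ and $F(T) = 0$. Rewriting the hypothesis as $-F'(t) = b(t) \le C + D F(t)$ gives the first-order linear differential inequality $F'(t) + D F(t) \ge -C$ on $[0,T]$.

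Next I would use the integrating factor $e^{Dt}$: multiplying through yields $\bigl(e^{Dt} F(t)\bigr)' \ge -C e^{Dt}$. Integrating this inequality from $t$ to $T$ and exploiting $F(T) = 0$ produces
\begin{equation*}
-e^{Dt} F(t) \ge -\frac{C}{D}\bigl(e^{DT} - e^{Dt}\bigr),
\end{equation*}
which, after dividing by $e^{Dt}$, gives the explicit bound $F(t) \le \frac{C}{D}\bigl(e^{D(T-t)} - 1\bigr)$.

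Finally I would substitute this bound back into the hypothesis $b(t) \le C + D F(t)$, obtaining $b(t) \le C + C\bigl(e^{D(T-t)}-1\bigr) = C e^{D(T-t)}$, which is the desired conclusion. There is no real obstacle here; the only thing to be careful about is orienting the integration correctly (from $t$ to $T$, using $F(T)=0$ as the boundary value), since reversing the limits would produce the wrong sign and the usual forward Gronwall bound $Ce^{Dt}$ instead.
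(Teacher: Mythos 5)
Your proof is correct: the substitution $F(t)=\int_t^T b(\tau)\,d\tau$, the integrating factor $e^{Dt}$, integration from $t$ to $T$ using $F(T)=0$, and back-substitution into $b(t)\le C+DF(t)$ all go through exactly as you describe, yielding $b(t)\le Ce^{D(T-t)}$. The paper states this backward Gronwall lemma without proof, so there is nothing to compare against; your argument is the standard one and is complete.
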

			Next we define fractional powers of the Dirichlet Laplacian
			%\color{red}
			\begin{equation*}
				Af:= -\Delta f.
			\end{equation*}
			Since $A$ is a linear, densely defined self-adjoint and positive definite
			elliptic operator on the connected bounded domain  $\Omega $ with
			Dirichlet  boundary condition, {  using spectral theory, it is easy to show that  the eigenvalues of $A$ are given by}  $\lambda_{\bf p}=|{\bf p}|^2= p_1^2+p_2^2+\cdots +p_d^2  $.
			The corresponding eigenfunctions are denoted respectively by
			\begin{equation}\label{eigenfunctions-laplacian}
				\psi_{\bf p}({\bf x})=\bigg(\sqrt{\frac{2}{\pi}}\bigg)^d\sin (p_1 x_1)\sin(p_2x_2)\cdots \sin(p_d x_d).
			\end{equation}
			Thus the eigenpairs $(\lambda_{\bf p},\psi_{\bf p})$,
			$p\in \mathbb{N}^d$, satisfy
			\[
			\begin{cases}
			A \psi_{\bf p}({\bf x})
			=
			-\lambda_{\bf p} \psi_{\bf p}({\bf x}),
			\quad & {\bf x} \in \Omega \\
			\psi_{\bf p}({\bf x})
			=
			0,
			\quad & {\bf x}\in \partial \Omega.
			\end{cases}
			\]
			The functions $\psi_{\bf p}$ are normalized so that
			$\{\psi_{\bf p}\}_{{\bf p}\in \mathbb{N}^d}$ is an orthonormal basis of $L^2(\Omega)$.\\
		}
		We will use the following notation:
		$|{\bf p}| =|(p_1,\cdots, p_d)|= \sqrt{p_1^2+...+p_d^2}$, $|{\bf n}|=|(n_1,\cdots, n_d)|= \sqrt{n_1^2+...+n_d^2}$.

		\begin{definition}
			For $\gamma>0$, we define
			\begin{equation}
				\mathcal{H}^\gamma(\Omega):= \Big\{ h \in L^2(\Omega):  \sum_{p_1=1}^\infty...\sum_{p_d=1}^\infty |{\bf p}|^{2\gamma}  <h, \psi_{\bf p}>^2 ~ <\infty \Big\}.
			\end{equation}
			The norm on $\mathcal{H}^\gamma(\Omega)$ is defined by
			\begin{equation}
				\|h\|^2_{	\mathcal{H}^\gamma(\Omega)}:=	\sum_{p_1=1}^\infty...\sum_{p_d=1}^\infty |{\bf p}|^{2\gamma}  <h, \psi_{\bf p}>^2.	
			\end{equation}
			
		\end{definition}

		{   For any   Banach space $X$, we denote by $L_{p}\left(0,T;X\right)$,  the Banach space of  measurable real functions
			$v:(0,T)\to X$  such that
			\begin{align*}
				\left\Vert v\right\Vert _{L^{p}\left(0,T;X\right)}=\left(\int_{0}^{T}\left\Vert v\left(\cdot,t\right)\right\Vert _{X}^{p}dt\right)^{1/p}<\infty,\quad 1\le p<\infty,
			\end{align*}
			\begin{align*}
				\left\Vert v\right\Vert _{L^{\infty}\left(0,T;X\right)}= \text{esssup}_{0<t<T}\left\Vert v\left(\cdot,t\right)\right\Vert _{X}<\infty,\quad p=\infty.
			\end{align*}
			
		}		
	
		Let $\beta:\mathbb{N}^d\to \mathbb{R}$ be a function.
		We state  the next result  which gives  error estimate between $H$ and $\widehat H_{\beta_{\mathbf n}}$, and  error estimate between $\widehat G_{\beta_{\bf n}} $  and $G$.

			\begin{theorem}[Theorem 2.1 in Kirane et al. \cite{Tuan3}]  \label{theorem2.1}
				Define the set $	\mathcal{W}_{   \beta_{\bf n}}$ for any ${\bf n}=(n_1,..n_d)\in \mathbb{N}^d$
				\begin{equation}
				\mathcal{W}_{   \beta_{\bf n}}=\mathcal{W}_{  \beta_{\bf n}} = \Big\{ {\bf p} =(p_1,...p_d) \in \mathbb{N}^d  : |{\bf p}|^2= \sum_{k=1}^d p_k^2 \le \beta_{\bf n}= \beta(n_1,...n_d)  \Big\}
				\end{equation}
				where $\beta_{\bf n}$  satisfies
				\begin{equation*}
				\lim_{|{\bf n}| \to +\infty} \beta_{\bf n}=+\infty.
				\end{equation*}
				
				For a given ${\bf n}$ and $\beta_{\bf n}$ we define  functions that are approximating  $H, G$ as follows
				\begin{equation}
				\widehat H_{\beta_{\bf n}}({\bf x}) = \sum_{{\bf p} \in  	\mathcal{W}_{  \beta_{\bf n}}   } \Bigg[\frac{\pi^d}{\prod_{k=1}^d n_k } \sum_{i_1=1}^{n_1}...\sum_{i_d=1}^{n_d} \widetilde D_{i_1,i_2,...i_d} \psi_{\bf p}	( x_{i_1},...x_{i_d}) \Bigg] \psi_{\bf p}({\bf x})
				\end{equation}
				and
				\begin{equation}
				\widehat G_{\beta_{\bf n}}({\bf x},t) = \sum_{p \in 	\mathcal{W}_{  \beta_{\bf n}}  } \Bigg[\frac{\pi^d}{\prod_{k=1}^d n_k } \sum_{i_1=1}^{n_1}...\sum_{i_d=1}^{n_d} \widetilde G_{i_1,i_2,...i_d}(t) \psi_{\bf p}	( x_{i_1},...x_{i_d}) \Bigg] \psi_{\bf p}({\bf x}).
				\end{equation}
				Let $\mu=(\mu_1,...\mu_d) \in {\mathbb R}^d$ with $\mu_k >\frac{1}{2} $ for  any $ k=\overline {1,d}$. Let us choose $\mu_0 \ge { d \max(\mu_1,...\mu_d )}$.
				If  $H \in \mathcal{H}^{\mu_0}(\Omega) $ and $G \in L^\infty (0,T;\mathcal{H}^{\mu_0}(\Omega) )$ then the following estimates hold
				\begin{eqnarray}
				\begin{aligned}
				&	{\bf E}\Big\| \widehat H_{\beta_{\bf n}} -H  \Big\|^2_{L^2(\Omega)} \le  \overline C (\mu_1,...\mu_d, H) \beta_{\bf n}^{d/2} \prod_{k=1}^d  ( n_k)^{-4\mu_k}+{ 4\beta_{\bf n}^{-\mu_0}} \Big\| H \Big\|^2_{\mathcal{H}^{\mu_0}(\Omega) },\nn\\
				&	{\bf E}\Big\| \widehat G_{\beta_{\bf n}}(.,t) -G(.,t)  \Big\|^2_{L^\infty(0,T; L^2(\Omega))} \le  \overline C (\mu_1,...\mu_d, H) \beta_{\bf n}^{d/2} \prod_{k=1}^d  ( n_k)^{-4\mu_k}+ {4\beta_{\bf n}^{-\mu_0} }\Big\| G \Big\|^2_{L^\infty (0,T;\mathcal{H}^{\mu_0}(\Omega) ) }	,
				\end{aligned}
				\end{eqnarray}	
				where
				\begin{equation*}
				\overline C (\mu_1,...\mu_d, H)= 8\pi^d  V_{max}^2  \frac{2 \pi^{d/2}}{ d  \Gamma(d/2)} + \frac{ 16 \mathcal{C}^2 (\mu_1,...\mu_d) \pi^{d/2}}{ d  \Gamma(d/2)}  \Big\| H \Big\|^2_{\mathcal{H}^{\mu_0}(\Omega) }.
				\end{equation*}	
			\end{theorem}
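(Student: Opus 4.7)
The plan is to exploit the orthonormality of $\{\psi_{\bf p}\}$ in $L^2(\Omega)$ to reduce the problem to controlling a single empirical Fourier coefficient, and then to split that control into a deterministic quadrature error and a random variance contribution. Writing $H = \sum_{\bf p} H_{\bf p}\psi_{\bf p}$ with $H_{\bf p}=\langle H,\psi_{\bf p}\rangle$, and denoting the empirical coefficient by
\[
\widehat H^{\bf n}_{\bf p} \;=\; \frac{\pi^d}{\prod_{k=1}^d n_k}\sum_{i_1=1}^{n_1}\cdots\sum_{i_d=1}^{n_d}\widetilde D_{i_1,\dots,i_d}\,\psi_{\bf p}(x_{i_1},\dots,x_{i_d}),
\]
Parseval gives the orthogonal decomposition
\[
\bigl\|\widehat H_{\beta_{\bf n}}-H\bigr\|_{L^2(\Omega)}^2 \;=\; \sum_{{\bf p}\in\mathcal{W}_{\beta_{\bf n}}}\bigl|\widehat H^{\bf n}_{\bf p}-H_{\bf p}\bigr|^2 \;+\; \sum_{{\bf p}\notin\mathcal{W}_{\beta_{\bf n}}}|H_{\bf p}|^2.
\]
The second (tail) sum is the easy part: since $|{\bf p}|^2>\beta_{\bf n}$ outside $\mathcal{W}_{\beta_{\bf n}}$, I multiply and divide by $|{\bf p}|^{2\mu_0}$ to obtain the bound $\beta_{\bf n}^{-\mu_0}\|H\|_{\mathcal{H}^{\mu_0}(\Omega)}^2$, which matches the stated estimate (the factor $4$ will just be slack absorbed during the final combining step).

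For the first sum I insert and subtract the noise-free Riemann sum of $H\psi_{\bf p}$, obtaining for each ${\bf p}\in\mathcal{W}_{\beta_{\bf n}}$
\[
\widehat H^{\bf n}_{\bf p}-H_{\bf p} \;=\; \underbrace{\frac{\pi^d}{\prod n_k}\sum_{\bf i} H({\bf x}_{\bf i})\psi_{\bf p}({\bf x}_{\bf i})-H_{\bf p}}_{\text{quadrature error }Q_{\bf p}} \;+\; \underbrace{\frac{\pi^d}{\prod n_k}\sum_{\bf i}\Lambda_{\bf i}\Upsilon_{\bf i}\psi_{\bf p}({\bf x}_{\bf i})}_{\text{noise }N_{\bf p}}.
\]
Taking $\mathbb{E}$ and using that $\Upsilon_{\bf i}$ are centered, independent of the quadrature term, and mutually independent $\mathcal{N}(0,1)$, the cross term vanishes and ${\bf E}|N_{\bf p}|^2 = \frac{\pi^{2d}}{\prod n_k^2}\sum_{\bf i}\Lambda_{\bf i}^2\psi_{\bf p}({\bf x}_{\bf i})^2 \le V_{\max}^2\tfrac{2^d\pi^d}{\prod n_k}$ since $\|\psi_{\bf p}\|_\infty^2=(2/\pi)^d$. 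Summing this over the at most $O(\beta_{\bf n}^{d/2})$ multi-indices in $\mathcal{W}_{\beta_{\bf n}}$ (counted via the volume of a ball of radius $\sqrt{\beta_{\bf n}}$, whence the constant $\tfrac{2\pi^{d/2}}{d\,\Gamma(d/2)}$) reproduces the $V_{\max}^2$-part of the constant $\overline C(\mu_1,\dots,\mu_d,H)$.

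The main obstacle is the deterministic quadrature error $Q_{\bf p}$: this is a midpoint-rule error for the integral of the product $H({\bf x})\psi_{\bf p}({\bf x})\sin\text{-functions}$ of frequencies $p_k\le \sqrt{\beta_{\bf n}}$, so the usual midpoint bound $O(n_k^{-2})$ is insufficient because it loses powers of $p_k$. The standard route is a one-dimensional-at-a-time argument: expand $H$ in the full Fourier basis, use that $\psi_{{\bf p}}(x_{i_k})\psi_{{\bf q}}(x_{i_k})$ summed in $i_k$ produces an aliasing identity that is nonzero only for $q_k\equiv\pm p_k \pmod{2n_k}$, and then exploit the smoothness $H\in\mathcal{H}^{\mu_0}$ with $\mu_k>1/2$ to bound the aliased modes by $\prod_k n_k^{-2\mu_k}\|H\|^2_{\mathcal{H}^{\mu_0}}$ (this is where the hypothesis $\mu_0\ge d\max_k\mu_k$ is used to control cross terms via Cauchy–Schwarz). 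Summed over $\mathcal{W}_{\beta_{\bf n}}$ this produces the $\beta_{\bf n}^{d/2}\prod n_k^{-4\mu_k}$-term with its constant $\mathcal{C}(\mu_1,\dots,\mu_d)\|H\|^2_{\mathcal{H}^{\mu_0}}$, and combining with the truncation and noise bounds above yields the first stated inequality.

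The argument for $\widehat G_{\beta_{\bf n}}$ is literally the same carried out pointwise in $t$: since $\Psi_{\bf i}(t)$ is a Brownian motion, ${\bf E}\Psi_{\bf i}(t)^2=t\le T$ so the noise contribution is bounded by the same expression (up to an absorbable factor of $T$), the quadrature step is identical with $H$ replaced by $G(\cdot,t)$, and taking $\mathrm{esssup}_{t\in(0,T)}$ lands us in $L^\infty(0,T;L^2(\Omega))$ on the left and in $L^\infty(0,T;\mathcal{H}^{\mu_0}(\Omega))$ on the right, as stated.
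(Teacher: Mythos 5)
The first thing to say is that this paper does not prove the statement at all: it is imported verbatim as Theorem 2.1 of Kirane--Nane--Tuan \cite{Tuan3}, so there is no in-paper proof to compare against. The closest in-paper material is the $H^1$ analogue carried out inside the proof of Theorem \ref{theorem5.1} (the decomposition \eqref{erroraa} into $A_{111}$, $A_{222}$, $A_{333}$), and your plan matches that computation step for step: Parseval splits the error into in-window coefficient errors plus a truncation tail; the tail is handled by weighting with $|{\bf p}|^{2\mu_0}$ and gives $\beta_{\bf n}^{-\mu_0}\|H\|^2_{\mathcal{H}^{\mu_0}}$; the in-window error is split into a centered noise term with variance $O(1/\prod_k n_k)$ per coefficient and a deterministic quadrature error; and the count $\mathrm{card}(\mathcal{W}_{\beta_{\bf n}})\le \frac{2\pi^{d/2}}{d\,\Gamma(d/2)}\beta_{\bf n}^{d/2}$ supplies the $\beta_{\bf n}^{d/2}$ factor. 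So the architecture is the intended one, and your exact orthogonal decomposition (which avoids the paper's repeated $(a+b)^2\le 2a^2+2b^2$ and hence the factor $4$) is, if anything, cleaner.

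Two substantive caveats. First, the only genuinely hard step --- the bound on the quadrature error $Q_{\bf p}$ --- is the one you merely sketch. You correctly identify the mechanism (midpoint nodes give a discrete sine orthogonality with aliases at $q_k\equiv\pm p_k \pmod{2n_k}$, and $\mu_k>\tfrac12$ makes the aliased tail $\sum_{j\ge1}(2jn_k\pm p_k)^{-2\mu_k}$ summable), but you do not carry out the estimate that actually produces $\mathcal{C}(\mu_1,\dots,\mu_d)\prod_k n_k^{-2\mu_k}\|H\|_{\mathcal{H}^{\mu_0}}$ per coefficient (note also you write $\|H\|^2_{\mathcal{H}^{\mu_0}}$ where the unsquared norm is meant at that stage); this is precisely where $\mu_0\ge d\max_k\mu_k$ enters and it is the substance of the cited theorem, so as written your proposal is a correct outline rather than a complete proof. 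Second, your variance computation honestly yields $V_{\max}^2(2\pi)^d\,\mathrm{card}(\mathcal{W}_{\beta_{\bf n}})/\prod_k n_k\sim \beta_{\bf n}^{d/2}/\prod_k n_k$, whereas the printed statement attaches $\prod_k n_k^{-4\mu_k}$ (which is strictly smaller, since $4\mu_k>2$) to the entire first term including the $V_{\max}^2$ part; your bound is the provable one, and the statement as printed appears to carry a typo --- compare \eqref{errorbb}, where the same noise term appears with $1/\prod_k n_k$. Finally, for $\widehat G_{\beta_{\bf n}}$ the left-hand side is an expectation of an essential supremum over $t$, so the pointwise bound ${\bf E}\,\Psi_{\bf i}(t)^2=t\le T$ does not immediately give the stated $L^\infty(0,T;L^2)$ control; one should invoke Doob's maximal inequality for $\sup_{t\le T}|\Psi_{\bf i}(t)|$, at the cost of an absolute constant.
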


		\begin{corollary}[ Corollary 2.1 in Kirane et al. \cite{Tuan3}]  \label{corollary2.1}
			Let  $H, G$ be as in  Theorem \eqref{theorem2.1}. Then  the term
			 $ {\bf E}	\Big\| \widehat H_{\beta_{\bf n}}-H \Big\|_{L^2(\Omega)}^2+T {\bf E}	\Big\| \widehat G_{\beta_{\bf n}}-G \Big\|_{L^\infty(0,T;L^2(\Omega))}^2$
			 is of order
			 \begin{equation*}  \label{veryimportant}
			 \max \Bigg( \frac{   \beta_{\bf n}^{d/2} }{\prod_{k=1}^d  ( n_k)^{4\mu_k} }, ~\beta_{\bf n}^{-\mu_0}  \Bigg).
			 \end{equation*}
		\end{corollary}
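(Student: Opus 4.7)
The plan is to deduce this corollary directly from Theorem \eqref{theorem2.1} by adding the two bounds and absorbing all constants into the big-$O$ notation.

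First, I would invoke Theorem \eqref{theorem2.1} to write
\begin{equation*}
\mathbf{E}\bigl\|\widehat H_{\beta_{\bf n}} - H\bigr\|_{L^2(\Omega)}^2 \le \overline C(\mu_1,\dots,\mu_d,H)\,\beta_{\bf n}^{d/2}\prod_{k=1}^d n_k^{-4\mu_k} + 4\beta_{\bf n}^{-\mu_0}\,\|H\|_{\mathcal{H}^{\mu_0}(\Omega)}^2,
\end{equation*}
and the analogous inequality for $\mathbf{E}\|\widehat G_{\beta_{\bf n}} - G\|_{L^\infty(0,T;L^2(\Omega))}^2$ (multiplied by $T$). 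Adding these two inequalities, the right-hand side takes the form
\begin{equation*}
A\,\beta_{\bf n}^{d/2}\prod_{k=1}^d n_k^{-4\mu_k} + B\,\beta_{\bf n}^{-\mu_0},
\end{equation*}
where $A$ and $B$ are finite constants depending only on $d$, the $\mu_k$, $T$, $V_{\max}$, $\|H\|_{\mathcal{H}^{\mu_0}}$, and $\|G\|_{L^\infty(0,T;\mathcal{H}^{\mu_0})}$, none of which depend on ${\bf n}$.

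Next, I would use the elementary inequality $a+b \le 2\max(a,b)$ valid for any nonnegative reals $a,b$, to conclude that
\begin{equation*}
A\,\beta_{\bf n}^{d/2}\prod_{k=1}^d n_k^{-4\mu_k} + B\,\beta_{\bf n}^{-\mu_0} \le 2\max(A,B)\,\max\!\Bigl(\beta_{\bf n}^{d/2}\prod_{k=1}^d n_k^{-4\mu_k},\ \beta_{\bf n}^{-\mu_0}\Bigr),
\end{equation*}
which is exactly the claim that the combined expectation is of order $\max\bigl(\beta_{\bf n}^{d/2}\prod_{k=1}^d n_k^{-4\mu_k},\ \beta_{\bf n}^{-\mu_0}\bigr)$.

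I do not anticipate any genuine obstacle here, as the statement is purely a bookkeeping consequence of Theorem \eqref{theorem2.1}; the only mildly nontrivial point is to verify that the hypotheses $H \in \mathcal{H}^{\mu_0}(\Omega)$ and $G \in L^\infty(0,T;\mathcal{H}^{\mu_0}(\Omega))$ ensure that both norms $\|H\|_{\mathcal{H}^{\mu_0}}$ and $\|G\|_{L^\infty(0,T;\mathcal{H}^{\mu_0})}$ are finite, so that $A$ and $B$ are legitimate ${\bf n}$-independent constants that can be absorbed into the order symbol.
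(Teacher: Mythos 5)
Your argument is correct and is exactly the intended (and essentially only) derivation: the corollary is an immediate consequence of Theorem \ref{theorem2.1}, obtained by summing the two stated bounds and absorbing the ${\bf n}$-independent constants via $a+b\le 2\max(a,b)$. The paper itself does not reprove this statement (it is quoted from Corollary 2.1 of \cite{Tuan3}), so there is nothing further to compare; your bookkeeping, including the check that $\|H\|_{\mathcal{H}^{\mu_0}(\Omega)}$ and $\|G\|_{L^\infty(0,T;\mathcal{H}^{\mu_0}(\Omega))}$ are finite under the stated hypotheses, is all that is needed.
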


					\section{Backward problem for  parabolic equations with random coefficients }
					\setcounter{equation}{0}
				
				In this section, we assume that
					 $ \Lambda_\ep, \Lambda$ are  continuous functions on $[0,T]$, hence  there exist  two positive numbers $A_0, A_1$  such that
					\begin{equation} \label{A1}
					A_0 \le \|\Lambda_\ep \|_{C([0,T])}= \sup_{0 \le t \le T} |\Lambda_\ep (t)| < A_1,~~	A_0 \le \|\Lambda \|_{C([0,T])}= \sup_{0 \le t \le T} |\Lambda (t)| < A_1.
					\end{equation}
					It is well-known that the function $F(u)= u-u^3$ is a locally Lipschitz function.
				We approximate the function $F(u)=u-u^3$ by $\overline F_{Q}$ defined by
					\[
					\overline F_{Q}\left(	{ u}(	{\bf x},t) \right)
					=
					\begin{cases}
					Q-Q^3, &\quad u(	{\bf x},t) >Q,\\
					u-u^3, &\quad - Q \le u(	{\bf x},t)\le Q, \\
					- Q+Q^3,&\quad u(	{\bf x},t) < - Q,
					\end{cases}
					\]
					for all $Q>0$. 	In the sequel we use  a parameter sequence  $Q_{\bf n}:=Q(n_1,n_2,...n_d) \to +\infty $ as $|{\bf n}| \to +\infty$. So, when $ |{\bf n}|$ is large enough, we have that  $Q_{\bf n} \ge \|{\bf u}\|_{L^\infty (0,T; L^2(\Omega))} $.  Moreover, we also have
					\begin{equation} \label{local2}
					\overline  	F_{Q_{\bf n} }( {\bf u} )= F( {\bf u}  )={\bf u}- {\bf u}^3 , ~~\text{for}~~ |{\bf n}| ~~\text{large enough}.
					\end{equation}
					Using \cite{Tuan5}, we also obtain  that 	$ \overline F_{Q_{\bf n}}$ is a globally Lipschitz source  function in the following sense
					\begin{equation}  \label{local3}
					\| 	\overline   F_{Q_{\bf n}} (v_1)- 	\overline  F_{Q_{\bf n}}(v_2) \|_{L^2(\Omega)} \le  \left(2+6Q_{\bf n}^2 \right)  \|v_1-v_2\|_{L^2(\Omega)} ,
					\end{equation}	
					for any $v_1, v_2 \in L^2(\Omega)$.
					
					\begin{lemma}  \label{lem3.1}
						Define the following space of functions
						\begin{equation} \label{definitionspace}
						\mathcal{Z}_{\gamma, B }(\Omega):=\Bigg\{ f \in L^2(\Omega),  \sum_{{\bf p} \in \mathbb{N}^d  }   |{\bf p}|^{2+2\gamma} e^{2B |{\bf p}|^{2}}  \big\langle f,\psi_{\bf p}\big\rangle_{L^2(\Omega)}^2 <+\infty \Bigg\},
						\end{equation}
						for any $\gamma \ge 0$ and $B\geq 0$. Define   also the operator  ${\bf P}= A_1 \Delta $ ( $A_1$ is the upper bound in \eqref{A1})  and
						$	{\bf P}_{\rho_{\bf n}}$   is defined as follows
						\begin{align}
						{\bf P}_{\rho_{\bf n}}(v)  &=A_1 \sum_{ {\bf |p|} \le   \sqrt{\frac{\rho_{\bf n}} {  A_1 }  }  }^\infty       |{\bf p}|^{2{}} \big\langle v(x),\psi_{\bf p}\big\rangle_{L^2(\Omega)} \psi_{\bf p},
						\end{align}
						for any function $v \in L^2(\Omega)$.  Then for  any $v \in L^2(\Omega)$
						\begin{align}
						\|{\bf P}_{\rho_{\bf n}}(v)\|_{L^2(\Omega)} \le \rho_{\bf n} \|v\|_{L^2(\Omega)},  \label{estimateP1}
						\end{align}
						and for $v \in 		\mathcal{Z}_{\gamma, TA_1 }(\Omega) $
						\begin{align}
						\| {\bf P}v-{\bf P}_{\rho_{\bf n}}v\|_{L^2(\Omega)} \le  A_1 \rho_{\bf n}^{-\gamma}  e^{-T \rho_{\bf n}  }   \|v\|_{ \mathcal{Z}_{\gamma, TA_1 }(\Omega)  }. \label{lemma4.1}
						\end{align}
						
					\end{lemma}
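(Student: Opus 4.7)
The plan is to expand everything in the orthonormal basis $\{\psi_{\bf p}\}_{{\bf p}\in\mathbb{N}^d}$ of $L^2(\Omega)$ and reduce both inequalities to (weighted) $\ell^2$ bounds on the Fourier coefficients $\langle v,\psi_{\bf p}\rangle$. By Parseval's identity, both $\|{\bf P}_{\rho_{\bf n}}v\|_{L^2(\Omega)}^2$ and $\|{\bf P}v-{\bf P}_{\rho_{\bf n}}v\|_{L^2(\Omega)}^2$ become sums of squared coefficients weighted by $A_1^2|{\bf p}|^4$, and the role of the spectral cutoff $|{\bf p}|\le\sqrt{\rho_{\bf n}/A_1}$ is precisely to split $\mathbb{N}^d$ into a low-frequency region (on which ${\bf P}_{\rho_{\bf n}}$ captures all of ${\bf P}$) and a high-frequency tail (on which the two operators disagree).

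For the boundedness estimate, I would use Parseval to write
\begin{equation*}
\|{\bf P}_{\rho_{\bf n}}v\|_{L^2(\Omega)}^2 = A_1^2 \sum_{|{\bf p}|^2\le \rho_{\bf n}/A_1} |{\bf p}|^4\,\langle v,\psi_{\bf p}\rangle^2,
\end{equation*}
and observe that the cutoff immediately gives the uniform bound $A_1^2|{\bf p}|^4\le \rho_{\bf n}^2$ on the summation range. Pulling this constant out and using $\sum\langle v,\psi_{\bf p}\rangle^2\le \|v\|_{L^2(\Omega)}^2$ delivers the claim after taking square roots.

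For the approximation estimate, the difference ${\bf P}v-{\bf P}_{\rho_{\bf n}}v$ is a pure tail, supported on $\{|{\bf p}|^2 > \rho_{\bf n}/A_1\}$. I would factor each summand as
\begin{equation*}
|{\bf p}|^4\,\langle v,\psi_{\bf p}\rangle^2 = \Big(|{\bf p}|^{2-2\gamma}\,e^{-2TA_1|{\bf p}|^2}\Big)\cdot |{\bf p}|^{2+2\gamma}\,e^{2TA_1|{\bf p}|^2}\,\langle v,\psi_{\bf p}\rangle^2,
\end{equation*}
recognize the second factor as the summand defining the $\mathcal{Z}_{\gamma,TA_1}(\Omega)$-norm, and bound the first factor uniformly on the tail. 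The exponential piece is immediate since $e^{-2TA_1|{\bf p}|^2}\le e^{-2T\rho_{\bf n}}$ on the tail, and the polynomial piece is controlled via the monotonicity of $s\mapsto s^{1-\gamma}e^{-2TA_1 s}$ on $[\rho_{\bf n}/A_1,\infty)$.

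The main obstacle I anticipate is this polynomial factor $|{\bf p}|^{2-2\gamma}$: when $\gamma<1$ it grows with $|{\bf p}|$, so the exponential weight $e^{-2TA_1|{\bf p}|^2}$ must do all of the controlling, and one has to verify that the supremum of $s\mapsto s^{1-\gamma}e^{-2TA_1 s}$ over the tail is attained at the left endpoint $s=\rho_{\bf n}/A_1$. This amounts to requiring $\rho_{\bf n}$ to exceed a fixed threshold depending only on $\gamma$, $T$ and $A_1$, which is harmless since $\rho_{\bf n}$ will eventually be sent to infinity. Apart from this calibration, the remaining work is routine Parseval bookkeeping followed by a square root.
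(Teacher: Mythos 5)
Your first estimate is exactly the paper's argument and is fine. The gap is in the second estimate, and it is quantitative rather than structural. Your factorization
\begin{equation*}
|{\bf p}|^4\langle v,\psi_{\bf p}\rangle^2=\Big(|{\bf p}|^{2-2\gamma}e^{-2TA_1|{\bf p}|^2}\Big)\cdot|{\bf p}|^{2+2\gamma}e^{2TA_1|{\bf p}|^2}\langle v,\psi_{\bf p}\rangle^2
\end{equation*}
is the one forced on you by the definition \eqref{definitionspace}, but chase the constants: on the tail $|{\bf p}|^2>\rho_{\bf n}/A_1$ the supremum of $s\mapsto s^{1-\gamma}e^{-2TA_1 s}$ (once $\rho_{\bf n}$ exceeds your threshold) is its value at the left endpoint, namely $(\rho_{\bf n}/A_1)^{1-\gamma}e^{-2T\rho_{\bf n}}$, not $\rho_{\bf n}^{-2\gamma}e^{-2T\rho_{\bf n}}$. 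After taking the square root you therefore obtain a bound of the form $C\,\rho_{\bf n}^{(1-\gamma)/2}e^{-T\rho_{\bf n}}\|v\|_{\mathcal{Z}_{\gamma,TA_1}(\Omega)}$, and the polynomial power $(1-\gamma)/2$ never equals the claimed $-\gamma$ for $\gamma\ge 0$. So your route, carried to completion, proves a genuinely weaker inequality than \eqref{lemma4.1}, and your proposal does not flag this.

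The paper gets the stated power by splitting $|{\bf p}|^{4}=|{\bf p}|^{-4\gamma}\cdot|{\bf p}|^{4+4\gamma}$ and then identifying $\sum_{|{\bf p}|^2>\rho_{\bf n}/A_1}|{\bf p}|^{4+4\gamma}e^{2TA_1|{\bf p}|^2}\langle v,\psi_{\bf p}\rangle^2$ with $\|v\|^2_{\mathcal{Z}_{\gamma,TA_1}(\Omega)}$ --- that is, it silently uses the weight $|{\bf p}|^{4+4\gamma}$ where \eqref{definitionspace} says $|{\bf p}|^{2+2\gamma}$. The mismatch you would run into is thus an internal inconsistency of the lemma itself: either the weight in \eqref{definitionspace} should be $|{\bf p}|^{4+4\gamma}e^{2B|{\bf p}|^2}$, in which case the paper's computation (and a corrected version of yours) yields \eqref{lemma4.1} up to the harmless factor $A_1^{2\gamma}$ that both you and the paper drop when bounding $|{\bf p}|^{-4\gamma}$ by $\rho_{\bf n}^{-2\gamma}$ instead of $(\rho_{\bf n}/A_1)^{-2\gamma}$; or else the right-hand side of \eqref{lemma4.1} should carry $\rho_{\bf n}^{(1-\gamma)/2}$ in place of $\rho_{\bf n}^{-\gamma}$. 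Since the exponential factor $e^{-T\rho_{\bf n}}$ is what drives the later convergence rates, neither reading damages the rest of the paper, but as a proof of the lemma as literally stated your plan does not close.
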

					\begin{proof}
						First, for any $v \in L^2(\Omega)$, we have
						\begin{align}
						\|{\bf P}_{\rho_{\bf n}}(v)\|_{L^2(\Omega)}^2 &= A_1^2 \sum_{ {\bf |p|} \le   \sqrt{\frac{\rho_{\bf n}} {  A_1 }  }  }^\infty    |{\bf p}|^{4} \big\langle v(x),\psi_{\bf p}\big\rangle^2_{L^2(\Omega)} \le \rho_{\bf n}^2   \sum_{ {\bf |p|} \le   \sqrt{\frac{\rho_{\bf n}} {  A_1 }  }  }^\infty \big\langle v(x),\psi_{\bf p}\big\rangle^2_{L^2(\Omega)}= \rho_{\bf n}^2  \|v\|_{L^2(\Omega)}^2,
						\end{align}
						and
						\begin{align}
						\| {\bf P}v -{\bf P}_{\rho_{\bf n}}(v)\|_{L^2(\Omega)}^2 &= A_1^2 \sum_{ {\bf |p|} >   \sqrt{\frac{\rho_{\bf n}} {  A_1 }  }  }^\infty    |{\bf p}|^{-4\gamma}  e^{-2TA_1{|\bf p|}^2 } |{\bf p}|^{4+4\gamma} e^{2TA_1{|\bf p|}^2 }  \big\langle v(x),\psi_{\bf p}\big\rangle^2_{L^2(\Omega)}\nn\\
						& \le  A_1^2 \rho_{\bf n}^{-2\gamma}  e^{-2TA_1 \rho_{\bf n}  } \sum_{ {\bf |p|} >   \sqrt{\frac{\rho_{\bf n}} {  A_1 }  }  }^\infty  |{\bf p}|^{4+4\gamma} e^{2TA_1{|\bf p|}^2 }    \big\langle v(x),\psi_{\bf p}\big\rangle^2_{L^2(\Omega)}\nn\\
						&=  A_1^2 \rho_{\bf n}^{-2\gamma}  e^{-2T \rho_{\bf n}  }   \|v\|_{ \mathcal{Z}_{\gamma, TA_1 }(\Omega)  }^2.
						\end{align}
						
					\end{proof}	
					
					Applying a modified quasi-reversibility method as in Section 4.1 in Kirane et al. \cite{Tuan3}, we introduce the regularized solution  defined by
					\bq
					\left\{ \begin{gathered}
					\frac{\partial \widehat U^\ep_{\rho_{\bf n}, \bn}}{\partial t}-\Lambda_\ep(t) \Delta \widehat U^\ep_{\rho_{\bf n}, \bn} -{\bf P}\widehat U^\ep_{\rho_{\bf n}, \bn} +{\bf P}_{\rho_{\bf n}}\widehat U^\ep_{\rho_{\bf n}, \bn}\\
					\quad \quad  = F_{Q_{\bf n}}({\widehat U^\ep_{\rho_{\bf n},\bn} }({\bf x},t))+\widehat G_{\beta_{\bf n} }({\bf x},t) ,~~0<t<T, \hfill \\
					{\widehat U^\ep_{\rho_{\bf n},\bn} }({\bf x},t)= 0,~~{\bf x} \in \partial \Omega,\hfill\\
					{\widehat U^\ep_{\rho_{\bf n},\bn} }({\bf x},T)=\widehat H_{\beta_{\bf n}}({\bf x}). \hfill\\
					\end{gathered}  \right. \label{regu333}
					\eq
				Since the first equation of the system \eqref{regu333} contains the term  $\Lambda_\ep(t)$ which depends on $\ep$, it is suitable to  denote the solution of Problem \eqref{regu333} by $ \widehat U^\ep_{\rho_{\bf n}, \bn}$ with three variables $\rho_{\bf n}, \bn, \ep$.
					 Now, we give convergence rates between the regularized solution $\widehat U^\ep_{\rho_{\bf n},\bn}$ of Problem \eqref{regu333} and the solution ${\bf u}$ of Problem \eqref{problem555}. Furthermore, we show that $\widehat U^\ep_{\rho_{\bf n},\bn}$ converges to  ${\bf u}$ when $|{\bf n}| \to +\infty$ and $\ep \to 0$.

					\begin{theorem}  \label{theorem5.1}
						The problem \eqref{regu333}  has a unique solution  $\widehat U^\ep_{\rho_{\bf n},\bn}  \in C\left(\left[0,T\right];L^{2}\left(\Omega\right)\right)\cap L^{2}\left(0,T;H_0^{1}\left(\Omega\right)\right)$.
						Assume that Problem \eqref{problem555} has unique solution ${\bf u} \in L^\infty\lf(0,T; 		 \mathcal{Z}_{\gamma, TA_1 }(\Omega)\rt) $.

						\begin{enumerate}[{\bf  \upshape(a)}]
							\item  \text{ \bf Error estimate in $L^2$}.
							Let $H, G$ be as in Theorem \ref{theorem2.1}. 	Let  $\bn, \rho_{\bf n}$ be such that
							\begin{equation}
							\lim_{ |{\bf n}| \to +\infty } \beta_{\bf n}=	\lim_{ |{\bf n}| \to +\infty } \rho_{\bf n}=+\infty , \quad 	\lim_{ |{\bf n}| \to +\infty } \frac{  e^{2\rho_{\bf n} T}   \beta_{\bf n}^{d/2} }{\prod_{k=1}^d  ( n_k)^{4\mu_k} }=\lim_{ |{\bf n}| \to +\infty } e^{2\rho_{\bf n} T}   \bn^{-\mu_0}= 0,
							\end{equation}
							and
							\begin{equation}
							\rho_{\bf n} \le \frac{1}{T} \log \Big (\frac{1}{  {\mathcal E(\ep) }} \Big)
							\end{equation}
							for any  $ 0< {\mathcal E(\ep) }<1$ that  satisfies
							\begin{equation}
							\lim_{\ep \to 0}  \frac{\ep}{ {\mathcal E(\ep) } }=0.
							\end{equation}
							Let $Q_{\bf n}$ be  such that
							\begin{align}
							\lim_{ |{\bf n}| \to +\infty }	e^{6Q_{\bf n}^2T}  \rho_{\bf n}^{-2\gamma}= 		\lim_{ |{\bf n}| \to +\infty } e^{6Q_{\bf n}^2 T}\frac{  e^{2\rho_{\bf n} T}   \beta_{\bf n}^{d/2} }{\prod_{k=1}^d  ( n_k)^{4\mu_k} }=\lim_{ |{\bf n}| \to +\infty }	e^{6Q_{\bf n}^2T}e^{2\rho_{\bf n} T}   \bn^{-\mu_0}  =0.
							\end{align}
							and
							\begin{equation}
							Q_{\bf n} \le \sqrt{\frac{1}{6T} \log \Big (\frac{1}{  {\mathcal E_0(\ep) }} \Big) }
							\end{equation}
							where $ 0< {\mathcal E_0(\ep) }<1$ that  satisfies
							\begin{equation}
							\lim_{\ep \to 0}  \frac{\ep}{ {\mathcal E(\ep) } {\mathcal E_0(\ep) } }=0.
							\end{equation}
							Then for $|{\bf n}|$ large enough, and $\ep$ small enough
							$
							{\bf E}	\Big\| \widehat U^\ep_{\rho_{\bf n},\bn}-u \Big\|_{L^2(\Omega)}
							$
							is of order
							\begin{equation}
							e^{6Q_{\bf n}^2T}  \max \Bigg( 	  \frac{  e^{2\rho_{\bf n} (T-t)}   \beta_{\bf n}^{d/2} }{\prod_{k=1}^d  ( n_k)^{4\mu_k} },   e^{-2\rho_{\bf n} t} \rho_{\bf n}^{-2\gamma}, e^{2\rho_{\bf n} (T-t)}  \bn^{-\mu_0} \Bigg)+  \frac{\ep}{ {\mathcal E(\ep) } {\mathcal E_0(\ep) } }  . \label{er444}
							\end{equation}

\item  \text{ \bf Error estimate in $H^{1}(\Omega)$}.
							Let  $G$ be as in Theorem \ref{theorem2.1} and $H \in \mathcal{H}^{\mu_0+1}(\Omega)$. Let $\bn, \rho_{\bf n}$  be such that
							\begin{equation}
							\lim_{ |{\bf n}| \to +\infty } \beta_{\bf n}=	\lim_{ |{\bf n}| \to +\infty } \rho_{\bf n}=+\infty , \quad 	\lim_{ |{\bf n}| \to +\infty } \frac{  e^{2\rho_{\bf n} T}   \beta_{\bf n}^{\frac{d+2}{2}} }{\prod_{k=1}^d  ( n_k)^{4\mu_k} }=\lim_{ |{\bf n}| \to +\infty } e^{2\rho_{\bf n} T}   \bn^{-\mu_0}=  0,
							\end{equation}
							and
							\begin{equation}
							\rho_{\bf n} \le \frac{1}{T} \log \Big (\frac{1}{  {\mathcal E(\ep) }} \Big).
							\end{equation}
							Assume that  $Q_{\bf n}$  satisfies  that
							\begin{align}
							\lim_{ |{\bf n}| \to +\infty } \exp \Big(  \frac{48T Q_{\bf n}^2}{A_1-A_0}  \Big)  \rho_{\bf n}^{-2\gamma}
							=	\lim_{ |{\bf n}| \to +\infty }  \exp \Big(  \frac{48T Q_{\bf n}^2}{A_1-A_0}  \Big)  \frac{  e^{2\rho_{\bf n} T}   \beta_{\bf n}^{\frac{d+2}{2}} }{\prod_{k=1}^d  ( n_k)^{4\mu_k} }=0
							\end{align}
							and
							\begin{equation}
							Q_{\bf n} \le \sqrt{\frac{A_1-A_0}{48T} \log \Big (\frac{1}{  {\mathcal E_0(\ep) }} \Big) }.
							\end{equation}
							Then for $|{\bf n}|$ large enough, and $\ep$ small enough
							$
							{\bf E}	\Big\| \widehat U^\ep_{\rho_{\bf n},\bn}-{\bf u} \Big\|_{H^{1}(\Omega)}
							$
							is of order
							\begin{equation}
							\exp \Big(  \frac{48T Q_{\bf n}^2}{A_1-A_0}  \Big)\max \Bigg( 	  \frac{  e^{2\rho_{\bf n} (T-t)}   \beta_{\bf n}^{\frac{d+2}{2}} }{\prod_{k=1}^d  ( n_k)^{4\mu_k} },   e^{-2\rho_{\bf n} t} \rho_{\bf n}^{-2\gamma}, e^{2\rho_{\bf n} (T-t)}  \bn^{-\mu_0} \Bigg)+  \frac{\ep}{ {\mathcal E(\ep) } {\mathcal E_0(\ep) } } . \label{er555}
							\end{equation}
						\end{enumerate}
					\end{theorem}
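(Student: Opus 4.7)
My plan is to establish (i) well-posedness of \eqref{regu333}, then (ii) bound $\Theta:=\widehat U^\ep_{\rho_{\bf n},\bn}-{\bf u}$ in $L^{2}(\Omega)$, and (iii) repeat the argument in $H^{1}(\Omega)$. Well-posedness follows from Banach's fixed-point theorem on $C([0,T];L^{2}(\Omega))$ applied to the Volterra integral form of \eqref{regu333} obtained by expanding in $\{\psi_{\bf p}\}$ and integrating backward from $T$: the kernel is bounded because $\|{\bf P}_{\rho_{\bf n}}\|\le\rho_{\bf n}$ (from \eqref{estimateP1}) and the map is a contraction because $\overline F_{Q_{\bf n}}$ is globally Lipschitz with constant $2+6Q_{\bf n}^{2}$ (from \eqref{local3}).

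\textbf{The $L^{2}$ estimate.} Subtracting \eqref{problem555} from \eqref{regu333} and using \eqref{local2} to replace $F({\bf u})$ by $\overline F_{Q_{\bf n}}({\bf u})$ once $|{\bf n}|$ is large enough, $\Theta$ satisfies
\begin{equation*}
\Theta_{t}-\Lambda_\ep\Delta\Theta-{\bf P}\Theta+{\bf P}_{\rho_{\bf n}}\Theta=(\Lambda_\ep-\Lambda)\Delta{\bf u}+({\bf P}-{\bf P}_{\rho_{\bf n}}){\bf u}+\overline F_{Q_{\bf n}}(\widehat U^\ep_{\rho_{\bf n},\bn})-\overline F_{Q_{\bf n}}({\bf u})+\widehat G_{\bn}-G,
\end{equation*}
with $\Theta(\cdot,T)=\widehat H_{\bn}-H$. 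Projecting onto each $\psi_{\bf p}$ turns this into a scalar backward ODE whose amplification factor, integrated from $T$ down to $t$, is controlled by $e^{\rho_{\bf n}(T-t)}$: the low modes $|{\bf p}|^{2}\le\rho_{\bf n}/A_{1}$ still evolve under the original unstable dynamics but only up to eigenvalue $\rho_{\bf n}$, while higher modes are stabilised by the added $-({\bf P}-{\bf P}_{\rho_{\bf n}})$ term. Squaring, summing in ${\bf p}$, taking expectation and bounding the four forcing contributions via Corollary~\ref{corollary2.1} (data errors), Lemma~\ref{lem3.1} (operator approximation), and the elementary identity ${\bf E}\lf(\int_{t}^{T}\ep\overline\xi(s)g(s)\,ds\rt)^{2}\le\ep^{2}T\int_{t}^{T}g(s)^{2}\,ds$ for the Brownian perturbation, I obtain a backward integral inequality to which the Section~2 Gronwall lemma applies; the Lipschitz constant $2+6Q_{\bf n}^{2}$ produces the multiplier $e^{6Q_{\bf n}^{2}T}$ and, combined with the $e^{2\rho_{\bf n}(T-t)}$ amplification, yields \eqref{er444} after taking square roots.

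\textbf{The $H^{1}$ estimate.} The same scheme with Fourier weight $|{\bf p}|^{2}$ (i.e.\ multiplying the equation for $\Theta$ by $-\Delta\Theta$) produces $\frac{1}{2}\partial_{t}\|\nabla\Theta\|_{L^{2}}^{2}$ and a coercive $\|\Delta\Theta\|_{L^{2}}^{2}$ contribution whose coefficient is at least $A_{1}-\Lambda_\ep$. The Lipschitz forcing $(2+6Q_{\bf n}^{2})\|\Theta\|_{L^{2}}\|\Delta\Theta\|_{L^{2}}$ is absorbed by Young's inequality with weight $(A_{1}-A_{0})^{-1}$, which is exactly where the exponent $48Q_{\bf n}^{2}/(A_{1}-A_{0})$ of \eqref{er555} originates. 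The weight $|{\bf p}|^{2}$ also bumps the discretisation error from $\beta_{\bf n}^{d/2}$ to $\beta_{\bf n}^{(d+2)/2}$, which is why the stronger hypothesis $H\in\mathcal{H}^{\mu_{0}+1}(\Omega)$ is imposed.

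\textbf{Main obstacle.} The delicate point is the stochastic forcing $(\Lambda_\ep-\Lambda)\Delta{\bf u}=\ep\overline\xi(t)\Delta{\bf u}$: the multiplier $\Delta{\bf u}$ sits in $L^{2}$ only through the a priori assumption ${\bf u}\in\kg$. I would split $\Delta{\bf u}$ into a low-frequency part with $|{\bf p}|^{2}\le\rho_{\bf n}/A_{1}$ (operator norm $\le\rho_{\bf n}$) and a tail controlled by $\rho_{\bf n}^{-\gamma}e^{-T\rho_{\bf n}}$ via Lemma~\ref{lem3.1}; the low-frequency piece then produces a stochastic contribution of size $\ep\rho_{\bf n}$ per unit time which, after backward amplification $e^{\rho_{\bf n}(T-t)}$ and Gronwall amplification, is of order $\ep\,e^{\rho_{\bf n}T}e^{3Q_{\bf n}^{2}T}$. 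The admissibility conditions $\rho_{\bf n}\le T^{-1}\log(1/\mathcal{E}(\ep))$ and $Q_{\bf n}^{2}\le(6T)^{-1}\log(1/\mathcal{E}_{0}(\ep))$ (with $48/(A_{1}-A_{0})$ replacing $6$ in the $H^{1}$ case) are tuned precisely so that this residual reduces to the separate $\ep/(\mathcal{E}(\ep)\mathcal{E}_{0}(\ep))$ term appearing in \eqref{er444} and \eqref{er555}, while the other three pieces inside the outer $\max$ vanish from the prescribed limits on $\beta_{\bf n}$, $\rho_{\bf n}$, and $Q_{\bf n}$.
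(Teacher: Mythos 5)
Your plan follows the paper's own route essentially step for step: a contraction/Lipschitz argument for well-posedness, an energy--Gronwall estimate for the difference carrying the weight $e^{\rho_{\bf n}(t-T)}$, the data error from Corollary \ref{corollary2.1}, the truncation error from Lemma \ref{lem3.1}, the factor $e^{6Q_{\bf n}^{2}T}$ produced by the global Lipschitz constant $2+6Q_{\bf n}^{2}$ of $\overline F_{Q_{\bf n}}$, and, for part (b), testing with $-\Delta\Theta$ and absorbing the resulting $\|\Delta\Theta\|_{L^2}^2$ terms by Young's inequality against the coercive coefficient $A_1-\Lambda_\ep\ge A_1-A_0$, which is indeed the source of $\exp\big(48TQ_{\bf n}^{2}/(A_1-A_0)\big)$, of the $\beta_{\bf n}^{(d+2)/2}$ bump, and of the hypothesis $H\in\mathcal{H}^{\mu_0+1}(\Omega)$.

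The one place you genuinely deviate is the term $(\Lambda_\ep-\Lambda)\Delta{\bf u}=\ep\,\overline{\xi}(t)\Delta{\bf u}$, which you single out as the main obstacle and propose to treat by splitting $\Delta{\bf u}$ at frequency $\rho_{\bf n}/A_1$. That detour is unnecessary and, as written, does not close: your low-frequency piece contributes $\ep\rho_{\bf n}$ per unit time, yet in the next clause you quote a total of order $\ep\,e^{\rho_{\bf n}T}e^{3Q_{\bf n}^{2}T}$ with the factor $\rho_{\bf n}$ silently dropped; keeping it gives $\ep\rho_{\bf n}/(\mathcal{E}(\ep)\mathcal{E}_0(\ep))$, which still vanishes under the admissibility conditions but is logarithmically worse than the stated rates \eqref{er444} and \eqref{er555}. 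The paper's treatment (see \eqref{est4}) is both simpler and sharper: since ${\bf u}\in L^\infty\big(0,T;\mathcal{Z}_{\gamma,TA_1}(\Omega)\big)$ and the weight $e^{2TA_1|{\bf p}|^2}$ in \eqref{definitionspace} dominates every polynomial, one has $\|\Delta{\bf u}\|_{L^\infty(0,T;L^2(\Omega))}<\infty$ for free, so Cauchy--Schwarz together with ${\bf E}|\overline{\xi}(\tau)|^2=\tau$ bounds this contribution by $\ep^{2}T^{2}\|{\bf u}\|^2_{L^\infty(0,T;H^{2}(\Omega))}$ before any amplification, and the subsequent multiplication by $e^{2\rho_{\bf n}T}e^{6Q_{\bf n}^{2}T}$ yields exactly the residual claimed in the theorem. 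With that substitution your argument coincides with the paper's proof.
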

					
						\begin{remark}  \label{lemma3.1}
						Let us give one choice for  $ \beta_{\bf n}$ as follows
							\begin{equation} \label{betan}
							\beta_{\bf n} = \left( \prod_{k=1}^d  n_k  \right)^{\frac{1}{ 2 \al_0+d/2 }}
							\end{equation}
							then we choose $ \rho_{\bf n}$ such that
							\begin{equation}
						\rho_{\bf n}= \frac{\al_0 }{T (2 \al_0+d/2)} \log \left(  \prod_{k=1}^d  n_k \right).
							\end{equation}
					Since $	\lim_{\ep \to 0}  \frac{\ep}{ {\mathcal E(\ep) } }=0,$ we can choose ${\mathcal E(\ep) }=\ep^{m_0} $ for any $0<m_0<1$.  Since 	$\rho_{\bf n} \le \frac{1}{T} \log \Big (\frac{1}{  {\mathcal E(\ep) }} \Big)$, we know that
					\[
					 \frac{\al_0 }{T (2 \al_0+d/2)} \log \left(  \prod_{k=1}^d  n_k \right) \le \frac{m_0}{T} \log(\frac{1}{\ep}) .  \]
				Let us choose  $Q_{\bf n}$ such that  $  e^{6 T Q_{\bf n}^2 }= \left(\overline \Pi({\bf n}) \right)^{\delta_0-1}  $
					for any $0<  \delta_0 <1$.  So, we have
					\begin{equation}
				Q_{\bf n}= \sqrt{\frac{ \delta_0-1 }{6T} \log \left( \overline \Pi({\bf n}) \right)}.
					\end{equation}
					Here $\overline \Pi({\bf n}) $ is defined   by
						\begin{equation}\label{def:pi-n}
						\overline \Pi({\bf n}) = \max \Bigg(  e^{2T{\overline  M} (\sqrt{\rho_{\bf n}})}    \beta_{\bf n}^{d/2} \prod_{k=1}^d  ( n_k)^{-4\mu_k}, e^{2T{\overline  M } (\sqrt{\rho_{\bf n}})  } \beta_{\bf n}^{-\mu_0} ,  \Big| {\overline  M}(|{\sqrt{\rho_{\bf n}}}|)\Big|^{-2\gamma} \Bigg).
						\end{equation}
					 Let us continue to choose ${\mathcal E_0(\ep) }= \ep^{m_1}$ for any $0<m_1<1-m_0$. Since 	$Q_{\bf n} \le \sqrt{\frac{1}{6T} \log \Big (\frac{1}{  {\mathcal E_0(\ep) }} \Big) } $, we have
					\[
					\log \left( \frac{1}{ \overline \Pi({\bf n})} \right)  \le \frac{m_1}{1-\delta_0} 	\log \left( \frac{1}{\ep} \right).
					\]

					Using similar argument as above, we can choose $\bn, \rho_{\bf n}, Q_{\bf n}$ for Part (b) of Theorem \eqref{theorem5.1}.
 					\end{remark}
 					
 					\begin{remark}
 						Our  analysis and techniques in this section  can be applied to consider a  space fractional version   of the   {\bf Ginzburg-Landau } type equation
 						\begin{equation}
 						\label{problem5555}
 						\left\{\begin{array}{l l l}
 						{\bf u}_t + \Lambda(t) (-\Delta )^\beta 	{\bf u} & =B(	{\bf x},t) 	{\bf u}-C(	{\bf x},t)	{\bf u}^3+G(	{\bf x},t), & \qquad (x,t) \in \Omega\times (0,T),\\
 						{\bf u}(	{\bf x},t)&=0, & \qquad 	{\bf x} \in \partial {\Omega},\\
 						{\bf u}(	{\bf x},T) & = H(	{\bf x}), & \qquad 	{\bf x} \in {\Omega},
 						\end{array}
 						\right.
 						\end{equation}
 						{ 	where the fractional Laplacian  $(-\Delta )^\beta  $  is defined by the spectral theorem as follows for nice functions
 							$$
 							(-\Delta )^\beta h({\bf x})=\sum_{{\bf p}\in \mathbb{N}^d} \lambda_{\bf p}^\beta <h, {\bf \psi}_{\bf p}>{\bf \psi}_{\bf p}({\bf x})=\sum_{{\bf p}\in \mathbb{N}^d} |{\bf p}|^{2\beta} <h, {\bf \psi}_{\bf p}>{\bf \psi}_{\bf p}({\bf x}).
 							$$
 							See \cite{Tuan2} for more about this operator. }
 						Here, $B$ and $C$ are randomly perturbed observations
 						\begin{equation*}
 						\widetilde B_{i_1,i_2,...i_d}(t):=B	( x_{i_1},...x_{i_d},t)+\vartheta  \Psi_{i_1,i_2,...i_d}(t), \quad i_k= \overline {1,n_k},\quad k=\overline {1,d}
 						\end{equation*}
 						and
 						\begin{equation*}
 						\widetilde C_{i_1,i_2,...i_d}(t):=C	( x_{i_1},...x_{i_d},t)+\vartheta  \Psi_{i_1,i_2,...i_d}(t), \quad i_k= \overline {1,n_k},\quad k=\overline {1,d}.
 						\end{equation*}
 						First, thanks to Theorem \ref{theorem2.1},  we define the following functions
 						\begin{equation*}
 						\widehat B_{\beta_{\bf n}}({\bf x},t) = \sum_{{\bf p} \in  	\mathcal{W}_{  \beta_{\bf n}}   } \Bigg[\frac{\pi^d}{\prod_{k=1}^d n_k } \sum_{i_1=1}^{n_1}...\sum_{i_d=1}^{n_d} \widetilde B_{i_1,i_2,...i_d,t} \psi_p	( x_{i_1},...x_{i_d}) \Bigg] \psi_{\bf p}({\bf x})
 						\end{equation*}
 						and
 						\begin{equation*}
 						\widehat C_{\beta_{\bf n}}({\bf x},t) = \sum_{{\bf p} \in  	\mathcal{W}_{  \beta_{\bf n}}   } \Bigg[\frac{\pi^d}{\prod_{k=1}^d n_k } \sum_{i_1=1}^{n_1}...\sum_{i_d=1}^{n_d} \widetilde C_{i_1,i_2,...i_d,t} \psi_p	( x_{i_1},...x_{i_d}) \Bigg] \psi_{\bf p}({\bf x}).
 						\end{equation*}
 						Then  we continue to approximate the function $F({\bf v})=B({\bf x},t){\bf v}-C({\bf x},t){\bf v}^3$ by $\overline F_{Q_{\bf n}}$ defined by
 						\[
 						\overline F_{Q_{\bf n}}\left({\bf v}({\bf x},t) \right)
 						=
 						\begin{cases}
 						\widehat B_{\beta_{\bf n}}({\bf x},t) Q_{\bf n}-\widehat C_{\beta_{\bf n}}({\bf x},t)Q_{\bf n}^3, &\quad {\bf v}({\bf x},t) >Q_{\bf n},\\
 						\widehat B_{\beta_{\bf n}}({\bf x},t) {\bf v}-\widehat C_{\beta_n}(x,t) {\bf v}^3, &\quad - Q_{\bf n} \le {\bf v}({\bf x},t)\le Q_{\bf n}, \\
 						-\widehat B_{\beta_{\bf n}}({\bf x},t) Q_{\bf n}+\widehat C_{\beta_{\bf n}}({\bf x},t)Q_{\bf n}^3,&\quad {\bf v}({\bf x},t) < - Q_{\bf n}.
 						\end{cases}
 						\]
 						where we recall  $Q_{\bf n}$ as above.
 						Let $P= -A_1 (-\Delta )^\beta $ and
 						where $	{\bf P}_{\rho_{\bf n}}$ is defined by
 						\begin{equation*}
 						{\bf P}_{\rho_{\bf n}}(v)  = \sum_{{\bf |p|} \le M_{\bf n} A_1^{-\frac{1}{2\beta}  }}^\infty     |{\bf p}|^{2\beta} \big\langle v(x),\phi_p(x)\big\rangle_{L^2(\Omega)} \phi_p(x),
 						\end{equation*}
 						for any function $v \in L^2(\Omega)$.
 						We  can study  	a  regularized solution $\widehat U^\ep_{\rho_{\bf n},\bn}$   which  satisfies
 						\bq
 						\left\{ \begin{gathered}
 						\frac{\partial \widehat U^\ep_{\rho_{\bf n},\bn}}{\partial t}+ \Lambda_\ep(t) (-\Delta )^\beta\widehat U^\ep_{\rho_{\bf n},\bn} -{\bf P}\widehat U^\ep_{\rho_{\bf n},\bn} +{\bf P}_{\rho_{\bf n}}\widehat U^\ep_{\rho_{\bf n},\bn}  = F_{Q_{\bf n}}({\widehat U^\ep_{\rho_{\bf n},\bn} }({\bf x},t))+\widehat G_{\rho_{\bf n}}({\bf x},t) ,~~0<t<T, \hfill \\
 						{\widehat U^\ep_{\rho_{\bf n},\bn} }({\bf x},t)= 0,~~{\bf x} \in \partial \Omega,\hfill\\
 						{\widehat U^\ep_{\rho_{\bf n},\bn} }({\bf x},T)=\widehat H_{\rho_{\bf n}}({\bf x}). \hfill\\
 						\end{gathered}  \right. \nn
 						\eq
 						The convergence is not mentioned here and can be similarly worked out  as in  Theorem \eqref{theorem5.1}. This is also an  interesting topic in our  forthcoming works.

 					\end{remark}

					\begin{proof}[\bf Proof of Theorem \ref{theorem5.1}]
						We first prove the existence and uniqueness of the solution.
							Let $ \widehat V^\ep_{\rho_{\bf n},\bn}({\bf x}, t) =  \widehat U^\ep_{\rho_{\bf n},\bn}({\bf x}, T-t)$
							and set $ \overline \Lambda_\ep(t)= A_1- \Lambda_\ep(t) $.
							Then by \eqref{regu333}, it is obvious that  $\overline V_{\rho_{\bf n}, \bn}({\bf x},t) $ satisfies the following equation
						\bq
						\left\{ \begin{gathered}
						\frac{\partial \widehat V^\ep_{\rho_{\bf n}, \bn}}{\partial t} -\overline \Lambda_\ep(t) \Delta \widehat V^\ep_{\rho_{\bf n}, \bn} = 	\mathcal G \widehat V^\ep_{\rho_{\bf n}, \bn} ({\bf x},t))-\widehat G_{\beta_{\bf n} }({\bf x},t) ,~~0<t<T, \hfill \\
						{\widehat V^\ep_{\rho_{\bf n},\bn} }({\bf x},t)= 0,~~{\bf x} \in \partial \Omega,\hfill\\
						{\widehat V^\ep_{\rho_{\bf n},\bn} }({\bf x},0)=\widehat H_{\beta_{\bf n}}({\bf x}), \hfill\\
						\end{gathered}  \right. \label{regu33333}
						\eq
						where $\mathcal G$ is given by
						\[
						\mathcal G  w({\bf x},t)  = {\bf P}_{\rho_{\bf n}}w ({\bf x},t)  - F_{Q_{\bf n}} (w({\bf x},t)),
						\]
							for any  $w \in C\lf([0,T];L^2(\Omega)\rt)$.
						For any $w_1, w_2 \in C\lf([0,T];L^2(\Omega)\rt) $,  we obtain the following estimate
						\begin{align}
						&\|	\mathcal G  w_1 ({\bf x},t) - 	\mathcal G  w_2({\bf x},t)  \|_{L^2(\Omega)} \nn\\
						&\quad \quad \quad \quad \quad \le \|  F_{Q_{\bf n}} (w_1({\bf x},t))- F_{Q_{\bf n}} (w_2({\bf x},t)) \|_{L^2(\Omega)}  +\Big\|  {\bf P}_{\rho_{\bf n}}w_1 ({\bf x},t) -{\bf P}_{\rho_{\bf n}}w_2 ({\bf x},t)    \Big\|_{L^2(\Omega)} \nn\\
						&\quad \quad \quad \quad \quad \le \lf[ \left(2+6Q_{\bf n}^2 \right) + \rho_{\bf n} \rt]~\|w_1(\cdot,t)-w_2(\cdot,t) \|_{L^2(\Omega)} .
						\end{align}
						which we used  \eqref{local3} and \eqref{estimateP1}.
						So $\mathcal{G}$ is a Lipschitz function. Using the results of Chapter 12, Theorem 12.2, page 211 of \cite{chipot}, we complete the proof of Step 1. \\	
					For estimates between $\widehat U^\ep_{\rho_{\bf n},\bn}$ and ${\bf u}$, we divide the proof into two Parts:\\
						{\bf Part 1.} {\it  Error estimate in $L^2$: }\\
						Let $\overline \Lambda_\ep(t)= A_1- \Lambda_\ep(t)$.
						The main equation in \eqref{problem555} can be rewritten as follows
						\begin{align*}
						\frac{\partial {\bf u} }{\partial t} -  \Lambda_\ep(t) \Delta  {\bf u}& = F({\bf u}({\bf x},t))+ G({\bf x},t)- \Big(\Lambda_\ep(t)-  \Lambda(t) \Big)  \Delta   {\bf u}.
						\end{align*}
						For $\nu_{\bf n}>0$, we put $$ \mathbf{ Y}^\ep_{\rho_{\bf n},\nu_{\bf n}, \bn}({\bf x},t)=e^{\nu_{\bf n}(t-T)}\Big[ \widehat U^\ep_{\rho_{\bf n},\bn}({\bf x},t)-\textbf{u}({\bf x},t)\Big].$$
Then, from the last two  equalities,  and  an elementary computation gives
						\begin{equation}  \label{est1}
						\begin{split}
						\frac{\partial \mathbf{ Y}^\ep_{\rho_{\bf n},\nu_{\bf n}, \bn}({\bf x},t)}{\partial t}  &+  \overline \Lambda_\ep(t) \Delta  \mathbf{ Y}^\ep_{\rho_{\bf n},\nu_{\bf n},\bn}({\bf x},t)-\nu_{\bf n} \mathbf{ Y}^\ep_{\rho_{\bf n},\nu_{\bf n},\bn }({\bf x},t)\\
						& = -e^{\nu_{\bf n}(t-T)}  {\bf P}_{\rho_{\bf n}} \mathbf{ Y}^\ep_{\rho_{\bf n},\nu_{\bf n},\bn}({\bf x},t)+e^{\nu_{\bf n}(t-T)} \left( {\bf P}_{\rho_{\bf n}}-{\bf P} \right)\textbf{u}({\bf x},t) \\
						&  + e^{\nu_{\bf n}(t-T)}\Big(\Lambda_\ep(t)-  \Lambda(t)  \Big) \Delta  {\bf u}({\bf x},t) \\
						&+ e^{\nu_{\bf n} (t-T)}\Big[F_{Q_{\bf n} }({\widehat U^\ep_{\rho_{\bf n}, \bn} }({\bf x},t))- F({\bf u}({\bf x},t))\Big] + e^{\nu_{\bf n} (t-T)}\left[\widehat G_{\beta_{\bf n}}({\bf x},t)- G({\bf x},t)\right], %\label{difference}
						\end{split}
						\end{equation}
						and $$\mathbf{ Y}^\ep_{\rho_{\bf n},\nu_{\bf n}, \bn}({\bf x},t)|_{\partial \Omega}=0, \quad  \mathbf{ Y}^\ep_{\rho_{\bf n},\nu_{\bf n},\bn}({\bf x},T)=\widehat H_{\beta_{\bf n}}({\bf x})-H({\bf x}). $$
						By taking the inner product two sides of \eqref{est1} with $\mathbf{ Y}_{\rho_{\bf n},\nu_{\bf n}}$ and integrating over  $(t,T)$
						one deduces that
						\begin{align} \label{3J}
						& \|\mathbf{Y}_{\rho_{\bf n},\nu_{\bf n}, \bn}(\cdot, T)\|^2_{L^2(\Omega)}-\|\mathbf{Y}_{\rho_{\bf n},\nu_{\bf n},\bn}(\cdot, t)\|^2_{L^2(\Omega)} \nn\\
						&- 2\int_t^T \overline \Lambda_\ep(\tau)  \|\mathbf{Y}^\ep_{\rho_{\bf n},\nu_{\bf n}, \bn}(\cdot, \tau)\|^2_{H^{1}(\Omega)}d\tau - 2\nu_{\bf n} \int_t^T \|\mathbf{Y}^\ep_{\rho_{\bf n},\nu_{\bf n}, \bn}(\cdot, \tau)\|^2_{L^2(\Omega)} d\tau\nn\\
						&=\mathcal{L}_{1,{\bf n}}(t)+\mathcal{L}_{2,{\bf n}}(t)+\mathcal{L}_{3,{\bf n}}(t)+\mathcal{L}_{4,{\bf n}}(t)+\mathcal{L}_{5,{\bf n}}(t),
						\end{align}
						where
						\begin{equation}
						\mathcal{L}_{1,{\bf n}}(t):= 	-2 \int_t^T \int_{\Omega} e^{\nu_{\bf n}(\tau-T)}  {\bf P}_{\rho_{\bf n}} \mathbf{ Y}^\ep_{\rho_{\bf n},\nu_{\bf n},\bn}({\bf x},\tau)  \mathbf{ Y}^\ep_{\rho_{\bf n},\nu_{\bf n},\bn}({\bf x},\tau) d{\bf x} d\tau
						\end{equation}
						\begin{equation}
						\mathcal{L}_{2,{\bf n}}(t):= 	2 \int_t^T \int_{\Omega} e^{\nu_{\bf n}(\tau-T)} \left( {\bf P}_{\rho_{\bf n}}-{\bf P} \right)\textbf{u}({\bf x},\tau)  \mathbf{ Y}^\ep_{\rho_{\bf n},\nu_{\bf n},\bn}({\bf x},\tau) d{\bf x} d\tau
						\end{equation}
						\begin{equation}
						\mathcal{L}_{3,{\bf n}}(t):= -	2 \int_t^T \int_{\Omega} e^{\nu_{\bf n}(\tau-T)} \Big(\Lambda_\ep(\tau)-  \Lambda(\tau)  \Big)\Delta \textbf{u}({\bf x},\tau)  \mathbf{ Y}^\ep_{\rho_{\bf n},\nu_{\bf n},\bn}({\bf x},\tau) d{\bf x} d\tau
						\end{equation}
						\begin{equation}
						\mathcal{L}_{4,{\bf n}}(t):= 	2 \int_t^T \int_{\Omega} e^{\nu_{\bf n}(\tau-T)} \Big[F_{Q_{\bf n}}({\widehat U^\ep_{\rho_{\bf n},\bn} }({\bf x},\tau))- F({\bf u}({\bf x},\tau))\Big] \mathbf{ Y}^\ep_{\rho_{\bf n},\nu_{\bf n},\bn}({\bf x},\tau) d{\bf x} d\tau
						\end{equation}
						\begin{equation}
						\mathcal{L}_{5,{\bf n}}(t):= 	2 \int_t^T \int_{\Omega} e^{\nu_{\bf n}(\tau-T)} \Big[\widehat G_{\beta_{\bf n}}({\bf x},\tau)- G({\bf x},\tau)\Big] \mathbf{ Y}^\ep_{\rho_{\bf n},\nu_{\bf n},\bn}({\bf x},\tau) d{\bf x} d\tau.
						\end{equation}
						The expectation of $|\mathcal{L}_{1,{\bf n}}(t)|$	is bounded by
						\begin{align} \label{est2}
						{\bf E}	|\mathcal{L}_{1,{\bf n}}(t)|  &\le 2{\bf E}	 \Bigg(\int_t^T \| {\bf P}_{\rho_{\bf n}} \mathbf{ Y}^\ep_{\rho_{\bf n},\nu_{\bf n},\bn}(.,\tau)\|_{L^2(\Omega)}  \| \mathbf{ Y}^\ep_{\rho_{\bf n},\nu_{\bf n},\bn}(.,\tau) \|_{L^2(\Omega)} d\tau \Bigg) \nn\\
						&\le  2 \rho_{\bf n}  \int_t^T  {\bf E} \| \mathbf{ Y}^\ep_{\rho_{\bf n},\nu_{\bf n},\bn}(.,\tau) \|^2_{L^2(\Omega)} d\tau
						\end{align}		
						where we have used  \eqref{estimateP1}. 		
						For the term 	$\mathcal{L}_{2,{\bf n}}(t)$, it follows by \eqref{lemma4.1} and Cauchy-Schwartz inequality  that
						\begin{align} \label{est3}
						{\bf E}	|\mathcal{L}_{2,{\bf n}}(t)| &\le  	{\bf E} \Bigg( \int_t^T e^{2\nu_{\bf n}(\tau-T)}  A_1^2 \rho_{\bf n}^{-2\gamma} e^{-2T\rho_{\bf n} } \|\textbf{u}\|_{ L^\infty\lf(0,T; 		 \mathcal{Z}_{\gamma, TA_1 }(\Omega)\rt) }^2  d\tau \Bigg)	\nn\\
						&+{\bf E} \Bigg(  \int_t^T \| \mathbf{ Y}^\ep_{\rho_{\bf n},\nu_{\bf n},\bn}(.,\tau) \|_{L^2(\Omega)}^2 d\tau \Bigg)\nn\\
						&\le TA_1^2 \rho_{\bf n}^{-2\gamma} e^{-2T\rho_{\bf n} }	\|\textbf{u}\|_{L^\infty\lf(0,T; 		 \mathcal{Z}_{\gamma, TA_1 }(\Omega)\rt)}^2  + \int_t^T  {\bf E} \| \mathbf{ Y}^\ep_{\rho_{\bf n},\nu_{\bf n},\bn}(.,\tau) \|^2_{L^2(\Omega)} d\tau.
						\end{align}
						Next, for $\mathcal{L}_{3,{\bf n}}(t)$, 	From the inequality $2\langle a_1, a_2\rangle_{L^2(\Omega)} \leq \|a_1\|^2_{L^2(\Omega)} + \|a_2\|^2_{L^2(\Omega)}$ for any  $a_i \in L^2(\Omega) , ~(i=1,2)$, we infer
						\begin{equation} \label{est4}
						\begin{split}
						{\bf E}	|\mathcal{L}_{3,{\bf n}}(t)| &\le  	{\bf E} \Bigg( \int_t^T e^{2\nu_{\bf n} (\tau-T)}  \Big(\Lambda_\ep(\tau)-  \Lambda(\tau)  \Big)^2   \lf\| \Delta   {\bf u} \rt\|_{L^2(\Omega)}^2 d\tau  \Bigg)	+  	{\bf E} \Bigg(  \int_t^T \| \mathbf{Y}^\ep_{\rho_{\bf n},\nu_{\bf n},\bn}(., \tau)\|^2_{L^2(\Omega)} d\tau  \Bigg)	\\
						&\le   \|{\bf u}\|^2_{L^\infty (0,T;H^{2{}}(\Omega))} \int_t^T  {\bf E}  \Big(\Lambda_\ep(\tau)-  \Lambda(\tau)  \Big)^2  d\tau  +\int_t^T  {\bf E} \| \mathbf{ Y}^\ep_{\rho_{\bf n},\nu_{\bf n},\bn}(.,\tau) \|_{L^2(\Omega)}^2 d\tau\\
						&\le \ep^2 \left(\int_0^T \mathbb{\bf E} \Big|\overline  {\xi}(\tau)\Big|^2 d\tau \right) \|{\bf u}\|^2_{L^\infty (0,T;H^{2{}}(\Omega))}+ \int_t^T  {\bf E} \| \mathbf{ Y}^\ep_{\rho_{\bf n},\nu_{\bf n},\bn}(.,\tau) \|_{L^2(\Omega)}^2 d\tau\\
						&\le \ep^2 T^2  \|{\bf u}\|^2_{L^\infty (0,T;H^{2{}}(\Omega))}  + \int_t^T  {\bf E} \| \mathbf{ Y}^\ep_{\rho_{\bf n},\nu_{\bf n},\bn}(.,\tau) \|_{L^2(\Omega)}^2 d\tau,
						\end{split}	
						\end{equation}	
						where  we have used the fact that
						$
						\Lambda_\ep(t)- \Lambda(t)= \ep \overline  {\xi}(t)
						$
						and $\mathbb{\bf E} \Big|\overline  {\xi}(t)\Big|^2 =t$.\\				
For $\mathcal{L}_{4,{\bf n}}$,  thanks to \eqref{local2} and \eqref{local3}, we deduce that
						\begin{align} \label{est5}
						{\bf E}	|\mathcal{L}_{4,{\bf n}}(t)|&\leq	{\bf E} \Bigg(\int_t^T  \lf\|e^{\nu_n (\tau-T)}\Big[F_{Q_{\bf n}}({\widehat U^\ep_{\rho_{\bf n},\bn} }(.,\tau))- F({\bf u}(.,\tau))\Big]\rt\|_{L^2(\Omega)}~\|\mathbf{Y}^\ep_{\rho_{\bf n},\nu_{\bf n},\bn}(., \tau)\|_{L^2(\Omega)}d\tau \Bigg) \nn\\
						&\leq  2 \left( 1+3Q_n^2\right)\int_t^T 	{\bf E} \|\mathbf{Y}^\ep_{\rho_{\bf n},\nu_{\bf n},\bn}(., \tau)\|_{L^2(\Omega)}^2 d\tau.
						\end{align}
						The term ${\bf E}	|\mathcal{L}_{5,{\bf n}}(t)|$ can be bounded by
						\begin{equation} \label{est6}
\begin{split}
						{\bf E}	|\mathcal{L}_{5,{\bf n}}(t)|& \le	{\bf E} \Bigg( \int_t^T e^{2\nu_{\bf n} (\tau-T)} \lf\|\widehat G_{\rho_{\bf n}}(.,\tau)- G(.,\tau)\rt\|^2_{L^2(\Omega)}d\tau+  \int_t^T \|\mathbf{Y}_{\rho_{\bf n},\nu_{\bf n},\bn}(., \tau)\|^2_{L^2(\Omega)} d\tau\Bigg)\\
						&\le {\bf E} T \lf\|\widehat G_{\beta_{\bf n}}- G\rt\|^2_{L^\infty(0,T;L^2(\Omega))}+\int_t^T {\bf E} \| \mathbf{ Y}^\ep_{\rho_{\bf n},\nu_{\bf n}, \bn}(.,\tau) \|_{L^2(\Omega)}^2d\tau  .
\end{split}						
\end{equation}
						Combining \eqref{est1}, \eqref{est2}, \eqref{est3}, \eqref{est4}, \eqref{est5}, \eqref{est6}, we conclude that
						\begin{eqnarray}
						\begin{aligned} \label{3J}
						&{\bf E}\|\mathbf{Y}^\ep_{\rho_{\bf n},\nu_{\bf n}, \bn}(\cdot, T)\|^2_{L^2(\Omega)}-{\bf E}\|\mathbf{Y}^\ep_{\rho_{\bf n},\nu_{\bf n}, \bn}(\cdot, t)\|^2_{L^2(\Omega)} \nn\\
						&- 2\int_t^T \overline \Lambda_\ep(\tau)   {\bf E} \|\mathbf{Y}_{\rho_{\bf n},\nu_{\bf n}, \bn}(\cdot, \tau)\|^2_{H^{1}(\Omega)}d\tau -2 \nu_n \int_t^T {\bf E}\|\mathbf{Y}^\ep_{\rho_{\bf n},\nu_{\bf n}, \bn}(\cdot, \tau)\|^2_{L^2(\Omega)} d\tau\nn\\
						&\ge -2 \rho_{\bf n}  \int_t^T  {\bf E} \| \mathbf{ Y}^\ep_{\rho_{\bf n},\nu_{\bf n}, \bn}(.,\tau) \|^2_{L^2(\Omega)} d\tau-T \rho_{\bf n}^{-2\gamma }  e^{-2T\rho_{\bf n} }	{\bf E}  \|\textbf{u}\|_{\kg}^2 \nn\\
						&-  T {\bf E} \lf\|\widehat G_{\beta_{\bf n}}- G \rt\|^2_{L^\infty(0,T;L^2(\Omega))} -\ep^2   (T-t) \|{\bf u}\|^2_{L^\infty (0,T);H^{2}(\Omega))}\nn\\
						&-\left( 5+6Q_{\bf n}^2\right)\int_t^T 	{\bf E} \|\mathbf{Y}^\ep_{\rho_{\bf n},\nu_{\bf n}, \bn}(., \tau)\|_{L^2(\Omega)}^2 d\tau.
						\end{aligned}
						\end{eqnarray}
						This implies that
						\begin{align}
						{\bf E}\|\mathbf{Y}^\ep_{\rho_{\bf n},\nu_{\bf n}, \bn }(\cdot, t)\|^2_{L^2(\Omega)}  &+ \left(2 \nu_{\bf n} -2\rho_{\bf n} -5-6Q_{\bf n}^2 \right)\int_t^T 	{\bf E} \|\mathbf{Y}^\ep_{\rho_{\bf n},\nu_{\bf n}, \bn}(., \tau)\|_{L^2(\Omega)}^2 d\tau \nn\\
						&\le {\bf E}\|\mathbf{Y}^\ep_{\rho_{\bf n},\nu_{\bf n}, \bn}(\cdot, T)\|^2_{L^2(\Omega)}+ T \rho_n^{-2\gamma }  e^{-2T\rho_{\bf n} }	{\bf E}  \|\textbf{u}\|_{\kg}^2 \nn\\
						&+\ep^2   (T-t) \|{\bf u}\|^2_{L^\infty (0,T);H^{2{}}(\Omega))}+  T {\bf E} \lf\|\widehat G_{\beta_{\bf n}}- G \rt\|^2_{L^\infty(0,T;L^2(\Omega))} .
						\end{align}
						Let us choose $\nu_{\bf n}=\rho_{\bf n}$. Then	
						\begin{align}
						&e^{2\nu_{\bf n}(t-T)}	{\bf E}\|\widehat U^\ep_{\rho_{\bf n},\bn}(.,t)-\textbf{u}(.,t)\|^2_{L^2(\Omega)}\nn\\
						&\quad \quad \le  \left(5+6Q_{\bf n}^2 \right)\int_t^T 	e^{2\nu_{\bf n}(\tau-T)}	{\bf E}\|\widehat U^\ep_{\rho_{\bf n},\bn}(.,\tau)-\textbf{u}(.,\tau)\|^2_{L^2(\Omega)}   d\tau\nn\\
						&\quad \quad+{\bf E}\|\widehat H_{\beta_{\bf n}}-H\|^2_{L^2(\Omega)}+   T {\bf E} \lf\|\widehat G_{\beta_{\bf n}}- G \rt\|^2_{L^\infty(0,T;L^2(\Omega))}  \nn\\
						&\quad \quad+TA_1^2 \rho_{\bf n}^{-2\gamma} e^{-2T\rho_{\bf n} }	\|\textbf{u}\|_{\kg}^2  +\ep^2   (T-t) \|{\bf u}\|^2_{L^\infty (0,T);H^{2}(\Omega))}.
						\end{align}
						Multiplying both sides by $	e^{2\rho_{\bf n} T}	$ we get
						\begin{eqnarray}
						\begin{aligned}
						&e^{2\rho_{\bf n} t}	{\bf E}\|\widehat U^\ep_{\rho_{\bf n},\bn}(.,t)-\textbf{u}(.,t)\|^2_{L^2(\Omega)}  \nn\\
						&\le  \left(5+6Q_{\bf n}^2 \right)\int_t^T 	e^{2\rho_{\bf n} \tau}	{\bf E}\|\widehat U^\ep_{\rho_{\bf n},\bn}(.,\tau)-\textbf{u}(.,\tau)\|^2_{L^2(\Omega)}   d\tau\nn\\
						&+e^{2\rho_{\bf n} T}  {\bf E}\|\widehat H_{\beta_{\bf n} }-H\|^2_{L^2(\Omega)}+  e^{2\rho_{\bf n} T}   T {\bf E} \lf\|\widehat G_{\beta_{\bf n}}- G \rt\|^2_{L^\infty(0,T;L^2(\Omega))} \nn\\
						&+\ep^2 e^{2\rho_{\bf n} T}  T \|{\bf u}\|^2_{L^\infty (0,T);H^{2{}}(\Omega))}+ TA_1^2 \rho_{\bf n}^{-2\gamma} e^{-2T\rho_{\bf n} }	\|\textbf{u}\|_{\kg}^2. \nn\\
						\end{aligned}
						\end{eqnarray}
						Hence, Gronwall's inequality yields the desired estimate
						\begin{eqnarray}
						\begin{aligned}
						&e^{2\rho_{\bf n} t}	{\bf E}\|\widehat U^\ep_{\rho_{\bf n},\bn}(.,t)-\textbf{u}(.,t)\|^2_{L^2(\Omega)}   \nn\\
						&\quad \quad \quad \le \Bigg[ e^{2\rho_{\bf n} T}  {\bf E}\|\widehat H_{\beta_n}-H\|^2_{L^2(\Omega)}+ T e^{2\rho_{\bf n} T}  {\bf E} \lf\|\widehat G_{\beta_{\bf n}}- G \rt\|^2_{L^\infty(0,T;L^2(\Omega))} \nn\\
						&\quad \quad \quad+  TA_1^2 \rho_{\bf n}^{-2\gamma} 	\|\textbf{u}\|_{\kg}^2+\ep^2 e^{2\rho_{\bf n} T}  T \|{\bf u}\|^2_{L^\infty (0,T);H^{2}(\Omega))} \Bigg] e^{(5+6Q_{\bf n}^2)(T-t)}.
						\end{aligned}
						\end{eqnarray}
					This implies that
						\begin{align}  \label{er44}
						&	{\bf E}\|\widehat U^\ep_{\rho_{\bf n},\bn}(.,t)-\textbf{u}(.,t)\|^2_{L^2(\Omega)}   \nn\\
						&\quad  \le e^{2\rho_{\bf n} (T-t) }\Bigg[   {\bf E}\|\widehat H_{\beta_n}-H\|^2_{L^2(\Omega)}+ T {\bf E} \lf\|\widehat G_{\beta_{\bf n}}- G \rt\|^2_{L^\infty(0,T;L^2(\Omega))} \nn\\
						&\quad \quad \quad+  TA_1^2 \rho_{\bf n}^{-2\gamma}  e^{-2\rho_{\bf n} t}	\|\textbf{u}\|_{\kg}^2+\ep^2 e^{2\rho_{\bf n} (T-t)}  T \|{\bf u}\|^2_{L^\infty (0,T);H^{2}(\Omega))} \Bigg] e^{(5+6Q_{\bf n}^2)(T-t)}.
						\end{align}
						From Corollary \eqref{corollary2.1}, we see that the term
						$ {\bf E}	\Big\| \widehat H_{\beta_{\bf n}}-H \Big\|_{L^2(\Omega)}^2+T {\bf E}	\Big\| \widehat G_{\beta_{\bf n}}-G \Big\|_{L^\infty(0,T;L^2(\Omega))}^2$
						is of order
						\begin{equation}  \label{veryimportant}
						\max \Bigg( \frac{   \beta_{\bf n}^{d/2} }{\prod_{k=1}^d  ( n_k)^{4\mu_k} }, ~\beta_{\bf n}^{-\mu_0}  \Bigg).
						\end{equation}
						This together with \eqref{er44}, we obtain \eqref{er444}.

						\noindent {\bf Part 2. Estimate in $H^1(\Omega)$:}
						Recall that $H^1(\Omega)$   is the space of the function $f$ such that  $f$ and   $f'$ belong to $L^2(\Omega)$
with the norm defined by
$\| f\|_{H^1}^2= \|f \|^2_{L^2}+ \|f ' \|^2_{L^2}$.

						By taking the inner product two sides of \eqref{est1}  with $-\Delta \mathbf{ Y}^\ep_{\rho_{\bf n},\nu_{\bf n},\bn}$, and integrating over $(t, T)$
						one deduces that% {\color{red} please define the $H^1$ nor here????!!!!}
						\begin{align} \label{h1}
						& \|\mathbf{Y}^\ep_{\rho_{\bf n},\nu_{\bf n},\bn}(\cdot, T)\|^2_{H^1(\Omega)}-\|\mathbf{Y}^\ep_{\rho_{\bf n},\nu_{\bf n},\bn}(\cdot, t)\|^2_{H^1(\Omega)} \nn\\
						&- 2\int_t^T \overline \Lambda_\ep(\tau)  \|\Delta \mathbf{Y}^\ep_{\rho_{\bf n},\nu_{\bf n},\bn}(\cdot, \tau)\|^2_{L^{2}(\Omega)}d\tau - 2\nu_{\bf n} \int_t^T \|\mathbf{Y}^\ep_{\rho_{\bf n},\nu_{\bf n},\bn}(\cdot, \tau)\|^2_{H^1(\Omega)} d\tau\nn\\
						&=\mathcal{L}_{7,{\bf n}}(t)+\mathcal{L}_{8,{\bf n}}(t)+\mathcal{L}_{9,{\bf n}}(t)+\mathcal{L}_{10,{\bf n}}(t)+\mathcal{L}_{11,{\bf n}}(t)
						\end{align}
						where
						\begin{equation}
						\mathcal{L}_{7,{\bf n}}(t):= 	2 \int_t^T \int_{\Omega} e^{\nu_{\bf n}(\tau-T)}  {\bf P}_{\rho_{\bf n}} \mathbf{ Y}^\ep_{\rho_{\bf n},\nu_{\bf n},\bn}({\bf x},\tau)  \Delta \mathbf{ Y}^\ep_{\rho_{\bf n},\nu_{\bf n},\bn}({\bf x},\tau) d{\bf x} d\tau.
						\end{equation}
						\begin{equation}
						\mathcal{L}_{8,{\bf n}}(t):= -	2 \int_t^T \int_{\Omega} e^{\nu_{\bf n}(\tau-T)} \left( {\bf P}_{\rho_{\bf n}}-{\bf P} \right)\textbf{u}({\bf x},\tau)\Delta   \mathbf{ Y}^\ep_{\rho_{\bf n},\nu_{\bf n},\bn}({\bf x},\tau) d{\bf x} d\tau.
						\end{equation}
						\begin{equation}
						\mathcal{L}_{9,{\bf n}}(t):= 	2 \int_t^T \int_{\Omega} e^{\nu_{\bf n}(\tau-T)} \Big(\Lambda_\ep(\tau)-  \Lambda(\tau)  \Big)\Delta \textbf{u}({\bf x},\tau)  \Delta \mathbf{ Y}^\ep_{\rho_{\bf n},\nu_{\bf n},\bn}({\bf x},\tau) d{\bf x} d\tau.
						\end{equation}
						\begin{equation}
						\mathcal{L}_{10,{\bf n}}(t):= -	2 \int_t^T \int_{\Omega} e^{\nu_{\bf n}(\tau-T)} \Big[F_{Q_{\bf n} }({\widehat U^\epsilon_{\rho_{\bf n},\bn} }({\bf x},\tau))- F({\bf u}({\bf x},\tau))\Big] \Delta \mathbf{ Y}^\ep_{\rho_{\bf n},\nu_{\bf n},\bn}({\bf x},\tau) d{\bf x} d\tau.
						\end{equation}
						\begin{equation}
						\mathcal{L}_{11,{\bf n}}(t):=- 	2 \int_t^T \int_{\Omega} e^{\nu_{\bf n}(\tau-T)} \Big[\widehat G_{\beta_{\bf n}}({\bf x},\tau)- G({\bf x},\tau)\Big] \Delta \mathbf{ Y}^\ep_{\rho_{\bf n},\nu_{\bf n},\bn}({\bf x},\tau) {\bf x} d\tau.
						\end{equation}
						{Using the bound from Lemma \ref{lem3.1} we can easily deduce that }
						\begin{align}
						{\bf E}	|\mathcal{L}_{7,{\bf n}}(t)|
						\le  2 \rho_{\bf n}  \int_t^T  {\bf E} \| \mathbf{ Y}^\ep_{\rho_{\bf n},\nu_{\bf n},\bn}(.,\tau) \|^2_{H^1(\Omega)} d\tau. \label{h2}
						\end{align}			
						{Using Cauchy-Schwartz inequality and Lemma \ref{lem3.1}, we estimate 	$|\mathcal{L}_{8,{\bf n}}(t)|$ as follows	}	
						\begin{equation}\label{h3}
						\begin{split}
						{\bf E}	|\mathcal{L}_{8,{\bf n}}(t)| &\le  	{\bf E} \Bigg( \frac{16}{A_1-A_0} \int_t^T e^{2\nu_{\bf n}(\tau-T)}  A_1^2 \rho_{\bf n}^{-2\gamma} e^{-2T\rho_{\bf n} } \|\textbf{u}\|_{\kg}^2  d\tau\Bigg)\\
						& +	{\bf E} \Bigg(  \frac{A_1-A_0}{4}\int_t^T \|\Delta  \mathbf{ Y}_{\rho_{\bf n},\nu_{\bf n},\bn}({\bf x},\tau) \|_{L^2(\Omega)}^2 d\tau \Bigg)	\\
						&\le \frac{16 TA_1^2}{A_1-A_0}  \rho_{\bf n}^{-2\gamma} e^{-2T\rho_{\bf n} }\|\textbf{u}\|_{\kg}^2  +\frac{A_1-A_0}{4} \int_t^T  {\bf E} \|\Delta  \mathbf{ Y}_{\rho_{\bf n},\nu_{\bf n},\bn}(.,\tau) \|^2_{L^2(\Omega)} d\tau,
						\end{split}
						\end{equation}
						{and similarly using the fact that $\Lambda_\ep(\tau)-  \Lambda(\tau)=\epsilon \xi(t)$ we get}
						\begin{align}
						{\bf E}	|\mathcal{L}_{9,{\bf n}}(t)| &\le  	{\bf E} \Bigg(\frac{16}{A_1-A_0}  \int_t^T e^{2\nu_{\bf n}(\tau-T)}  \Big(\Lambda_\ep(\tau)-  \Lambda(\tau)  \Big)^2   \lf\| \Delta   {\bf u} \rt\|_{L^2(\Omega)}^2 d\tau\Bigg)\nn\\
						& +{\bf E} \Bigg( \frac{A_1-A_0}{4} \int_t^T \| \Delta \mathbf{Y}^\ep_{\rho_{\bf n},\nu_{\bf n},\bn}(,, \tau)\|^2_{L^2(\Omega)} d\tau \Bigg) 	\nn\\
						&\le  \frac{16}{A_1-A_0}   \|{\bf u}\|^2_{L^\infty
							(0,T);H^{2{}}(\Omega))} \int_t^T  {\bf E}  \Big(\Lambda_\ep(\tau)-  \Lambda(\tau)  \Big)^2  d\tau  \nn\\
						&+ \frac{A_1-A_0}{4} \int_t^T  {\bf E} \| \Delta \mathbf{ Y}^\ep_{\rho_{\bf n},\nu_{\bf n},\bn}(.,\tau) \|_{L^2(\Omega)}^2 d\tau\nn\\
						&\le \frac{16}{A_1-A_0}   \ep^2   T^2  \|{\bf u}\|^2_{L^\infty
							(0,T);H^{2{}}(\Omega))} +  \frac{A_1-A_0}{4} \int_t^T  {\bf E} \|\Delta  \mathbf{ Y}^\ep_{\rho_{\bf n},\nu_{\bf n},\bn}(.,\tau) \|_{L^2(\Omega)}^2 d\tau. \label{h4}
						\end{align}	
The term $    {\bf E}    |\mathcal{L}_{10,{\bf n}}(t)|$ is estimated  using the fact that for  $|{\bf n}|$ large enough   $F_{Q_{\bf n}}({\bf u }({\bf x},\tau))= F( {\bf u } ({\bf x},\tau))$
						\begin{eqnarray}
						\begin{aligned}
						{\bf E}	|\mathcal{L}_{10,{\bf n}}(t)| &\le \frac{16}{A_1-A_0}  \int_t^T  e^{2\rho_{\bf n}(\tau-T)} \Big\| F_{Q_{\bf n}}({\widehat U^\ep_{\rho_{\bf n},\bn} }({\bf x},\tau))- F({\bf u}(.,\tau)) \Big\|^2_{L^2(\Omega)} +\frac{A_1-A_0}{4} \Big\|\Delta  \mathbf{Y}^\ep_{\rho_{\bf n},\nu_{\bf n},\bn} \Big\|^2_{L^2(\Omega)}\nn\\
						&\le \frac{16}{A_1-A_0}  \left( 1+3Q_{\bf n}^2\right)\int_t^T 	{\bf E} \|\mathbf{Y}^\ep_{\rho_{\bf n},\nu_{\bf n},\bn}(., \tau)\|_{L^2(\Omega)}^2 d\tau+\frac{A_1-A_0}{4} \Big\|\Delta  \mathbf{Y}^\ep_{\rho_{\bf n},\nu_{\bf n},\bn} \Big\|^2_{L^2(\Omega)}.\label{h5}
						\end{aligned}
						\end{eqnarray}
						The term ${\bf E}	|\mathcal{L}_{11,{\bf n}}(t)|$ can be bounded by
						\begin{align}
						{\bf E}	|\mathcal{L}_{11,{\bf n}}(t)| & \le	{\bf E} \Bigg(\frac{16T}{A_1-A_0}  \int_t^T e^{2\nu_{\bf n} (\tau-T)} \lf\|\widehat G_{\beta_{\bf n}}(.,\tau)- G(.,\tau)\rt\|^2_{L^2(\Omega)}d\tau \Bigg)\nn\\
						& +	{\bf E} \Bigg( \frac{A_1-A_0}{4} \int_t^T \|\Delta \mathbf{Y}^\ep_{\rho_{\bf n},\nu_{\bf n},\bn}(., \tau)\|^2_{L^2(\Omega)} d\tau\Bigg)\nn\\
						&\le \frac{16T}{A_1-A_0} {\bf E} \lf\|\widehat G_{\beta_{\bf n}}- G \rt\|^2_{L^\infty(0,T;L^2(\Omega))}+\frac{A_1-A_0}{4}\int_t^T {\bf E} \| \Delta \mathbf{ Y}^\ep_{\rho_{\bf n},\nu_{\bf n},\bn}(.,\tau) \|_{L^2(\Omega)}^2d\tau. \label{h6}
						\end{align}		
						Combining \eqref{h1}, \eqref{h2}, \eqref{h3}, \eqref{h4}, \eqref{h5} gives
						\begin{eqnarray}
						\begin{aligned} \label{h6}
						&{\bf E}\|\mathbf{Y}^\ep_{\rho_{\bf n},\nu_{\bf n},\bn}(\cdot, T)\|^2_{H^1(\Omega)}-{\bf E}\|\mathbf{Y}^\ep_{\rho_{\bf n},\nu_{\bf n},\bn}(\cdot, t)\|^2_{H^1(\Omega)} \nn\\
						&-  2\int_t^T \overline \Lambda_\ep(\tau)  {\bf E} \|\Delta \mathbf{Y}^\ep_{\rho_{\bf n},\nu_{\bf n},\bn}(\cdot, \tau)\|^2_{L^{2}(\Omega)}d\tau - 2\nu_{\bf n} \int_t^T {\bf E}\|\mathbf{Y}^\ep_{\rho_{\bf n},\nu_{\bf n},\bn}(\cdot, \tau)\|^2_{H^1(\Omega)} d\tau\nn\\
						&\ge - 2 \rho_{\bf n}  \int_t^T  {\bf E} \| \mathbf{ Y}^\ep_{\rho_{\bf n},\nu_{\bf n},\bn}(.,\tau) \|^2_{H^1(\Omega)} d\tau- \frac{16 TA_1^2}{A_1-A_0}  \rho_{\bf n}^{-2\gamma} e^{-2T\rho_{\bf n} }\|\textbf{u}\|_{\kg}^2  \nn\\
						&- \frac{16}{A_1-A_0} {\bf E} \lf\|\widehat G_{\beta_{\bf n}}- G\rt\|^2_{L^\infty(0,T;L^2(\Omega))} -\frac{16}{A_1-A_0}   \ep^2   (T-t)  \|{\bf u}\|^2_{L^\infty
							(0,T);H^{2{}}(\Omega))}\nn\\
						&- \frac{16}{A_1-A_0}  \left( 1+3Q_{\bf n}^2\right) \int_t^T 	{\bf E} \|\mathbf{Y}^\ep_{\rho_{\bf n},\nu_{\bf n},\bn}(., \tau)\|_{L^2(\Omega)}^2 d\tau- (A_1-A_0) \int_t^T {\bf E} \| \Delta \mathbf{ Y}^\ep_{\rho_{\bf n},\nu_{\bf n},\bn}(.,\tau) \|_{L^2(\Omega)}^2d\tau.
						\end{aligned}
						\end{eqnarray}
						Choose $\nu_{\bf n}=\rho_{\bf n}$. Then	
						\begin{align}
						&e^{2\rho_{\bf n}(t-T)}	{\bf E}\|\widehat U^\ep_{\rho_{\bf n},\bn}(.,t)-\textbf{u}(.,t)\|^2_{H^1(\Omega)} \nn\\
						&\le   \frac{16}{A_1-A_0}  \left( 1+3Q_{\bf n}^2\right)\int_t^T 	e^{2\rho_{\bf n}(\tau-T)}	{\bf E}\|\widehat U^\ep_{\rho_{\bf n},\bn}(.,\tau)-\textbf{u}(.,\tau)\|^2_{L^2(\Omega)}   d\tau\nn\\
						&+{\bf E}\|\widehat H_{\beta_{\bf n}}-H\|^2_{H^1(\Omega)}+ \frac{16T}{A_1-A_0} {\bf E} \lf\|\widehat G_{\beta_{\bf n}}- G\rt\|^2_{L^\infty(0,T;L^2(\Omega))}\nn\\
						&+\frac{16 TA_1^2}{A_1-A_0}  \rho_{\bf n}^{-2\gamma} e^{-2T\rho_{\bf n} }\|\textbf{u}\|_{\kg}^2   +\frac{16}{A_1-A_0} \ep^2   (T-t) \|{\bf u}\|^2_{L^\infty (0,T);H^{2}(\Omega))}.
						\end{align}	
						Multiplying both sides of the last inequality by $e^{2T\rho_{\bf n}}$, we obtain
						\begin{align}  \label{important1}
						&e^{2\rho_{\bf n} t}	{\bf E}\|\widehat U^\ep_{\rho_{\bf n},\bn}(.,t)-\textbf{u}(.,t)\|^2_{H^1(\Omega)} \nn\\
						&\le   \frac{16}{A_1-A_0}  \left( 1+3Q_{\bf n}^2\right)\int_t^T 	e^{2\rho_{\bf n} \tau}	{\bf E}\|\widehat U^\ep_{\rho_{\bf n},\bn}(.,\tau)-\textbf{u}(.,\tau)\|^2_{L^2(\Omega)}   d\tau\nn\\
						&+e^{2T \rho_{\bf n}} {\bf E}\|\widehat H_{\beta_{\bf n}}-H\|^2_{H^1(\Omega)}+ \frac{16e^{2T \rho_{\bf n}} }{A_1-A_0} {\bf E} \lf\|\widehat G_{\beta_{\bf n}}- G\rt\|^2_{L^\infty(0,T;L^2(\Omega))}\nn\\
						&+\frac{16 TA_1^2}{A_1-A_0}  \rho_{\bf n}^{-2\gamma} \|\textbf{u}\|_{\kg}^2   +\frac{16}{A_1-A_0} \ep^2 e^{2T \rho_{\bf n}}   (T-t) \|{\bf u}\|^2_{L^\infty (0,T);H^{2}(\Omega))}.
						\end{align}	
						\noindent 	Now, we need to find an  upper bound of ${\bf E}\|\widehat H_{\beta_{\bf n}}-H\|^2_{H^1(\Omega)}$. By  Theorem 2.1 \cite{Tuan3}, we get
						\begin{align}   \label{erroraa}
						&\Big\| \widehat H_{\beta_{\bf n}}(x) -H(x)  \Big\|^2_{H^1(\Omega)}\nn\\
						&= 4  \sum_{{\bf p} \in   \mathcal{W}_{  \beta_{\bf n}} } |{\bf p}|^2 \Bigg[\frac{\pi^d}{\prod_{k=1}^d n_k } \sum_{i_1=1}^{n_1}...\sum_{i_d=1}^{n_d} \Lambda_{i_1,i_2,...i_d} \Upsilon_{i_1,i_2,...i_d} \psi_p	( x_{i_1},...x_{i_d}) -{\bf \overline \Gamma}_{{\bf n}, {\bf p}}\Bigg]^2  \nn\\
						&\quad \quad \quad  + 4  \sum_{{\bf p} \in   \mathcal{W}_{  \beta_{\bf n}} }  |{\bf p}|^2 \Big| H_{\bf p}\Big|^2\nn\\
						&\le \underbrace{\frac{8\pi^{2d}}{\left(\prod_{k=1}^d n_k\right)^2 } \sum_{{\bf p} \in   \mathcal{W}_{  \beta_{\bf n}}  } |{\bf p}|^2 \Bigg[\sum_{i_1=1}^{n_1}...\sum_{i_d=1}^{n_d} \Lambda_{i_1,i_2,...i_d} \Upsilon_{i_1,i_2,...i_d}  \Bigg]^2}_{:=A_{111}}\nn\\
						&+\underbrace{8 \sum_{{\bf p} \in   \mathcal{W}_{  \beta_{\bf n}}  } |{\bf p}|^2 \Big|{\bf \overline \Gamma}_{{\bf n},{\bf p}} \Big|^2}_{:=A_{222}}+ \underbrace{4  \sum_{{\bf p} \notin   \mathcal{W}_{  \beta_{\bf n}} }  |{\bf p}|^2 \Big| H_{\bf p}\Big|^2}_{:=A_{333}}.
						\end{align}
						The expectation of $A_{111}$ is bounded by
						\begin{align*}
						{\bf  E} A_{111} & \le \frac{8\pi^{2d}}{\left(\prod_{k=1}^d n_k\right)^2 } \sum_{p \in \mathcal{W}_{  \beta_{\bf n}}} \bn \frac{  \prod_{k=1}^d n_k}{\pi^d} V_{\max}^2  = \frac{8 \pi^d}{ \prod_{k=1}^d n_k } V_{\max}^2 \beta_{\bf n}  \text{card} \left( \mathcal{W}_{  \beta_{\bf n}}\right) .
						\end{align*}
						It follows from  Theorem 2.1 \cite{Tuan3}  that
						\begin{align}
						{\bf  E} A_{111}  \le   8\pi^d  V_{max}^2  \frac{2 \pi^{d/2}}{ d \Gamma(d/2)}   \frac{  \beta_{\bf n}^{\frac{d+2}{2}} }{ \prod_{k=1}^d n_k }. \label{errorbb}
						\end{align}
						which we use (2.30) of  \cite{Tuan3}	\begin{align}
						\text{card} \left(  	\mathcal{W}_{  \beta_{\bf n}} \right)  \le \frac{2 \pi^{d/2}}{ d  \Gamma(d/2)} \beta_{\bf n}^{d/2}.  \label{card}
						\end{align}
						From (2.37) of \cite{Tuan3}, we obtain
						\begin{align}
						A_{222}&= 8 \sum_{p \in \mathcal{W}_{  \beta_{\bf n}}} |{\bf p}|^2 \Big|{\bf \overline \Gamma}_{{\bf n},{\bf p}} \Big|^2 \nn\\
						&\le 8  \beta_{\bf n} \mathcal{C}^2 (\mu_1,...\mu_d) \Big\| H \Big\|^2_{\mathcal{H}^{\mu_0}(\Omega) } \prod_{k=1}^d  ( n_k)^{-4\mu_k} \text{card} \left(\mathcal{W}_{  \beta_{\bf n}} \right)\nn\\
						&\le \frac{ 16 \mathcal{C}^2 (\mu_1,...\mu_d) \pi^{d/2}}{ d  \Gamma(d/2)}  \Big\| H \Big\|^2_{\mathcal{H}^{\mu_0}(\Omega) } \beta_{\bf n}^{\frac{d+2}{2}} \prod_{k=1}^d  ( n_k)^{-4\mu_k}.  \label{errorcc}
						\end{align}
						For $A_{333}$ on the right hand side of \eqref{erroraa}, noting that $ {\bf |p|^2}  \ge \beta_{\bf n} $ if $ {\bf p} \notin  \mathcal{W}_{  \beta_{\bf n}}$,  we have the following estimation
						\begin{equation}
						A_{333}=4  \sum_{p \notin  \mathcal{W}_{  \beta_{\bf n}}}  |{\bf p}|^{-2\mu_0} |{\bf p}|^{2\mu_0+2} \Big| H_p\Big|^2 \le 4\beta_{\bf n}^{-\mu_0} \Big\| H \Big\|^2_{\mathcal{H}^{\mu_0+1}(\Omega) }. \label{errordd}
						\end{equation}
						Combining \eqref{erroraa}, \eqref{errorbb}, \eqref{errorcc} and \eqref{errordd},  we deduce that
						\begin{align}
						{\bf E}	\Big\| \widehat H_{\beta_{\bf n}}(x) -H(x)  \Big\|^2_{H^1(\Omega)} &\le 	{\bf E}		A_{111}+	A_{222}+	A_{333} \nn\\
						& \le  8\pi^d  V_{max}^2  \frac{2 \pi^{d/2}}{ d  \Gamma(d/2)}   \frac{  \beta_{\bf n}^{\frac{d+2}{2}} }{ \prod_{k=1}^d n_k }\nn\\
						&+ \frac{ 16 \mathcal{C}^2 (\mu_1,...\mu_d) \pi^{d/2}}{ d \Gamma(d/2)}  \Big\| H \Big\|^2_{\mathcal{H}^{\mu_0}(\Omega) } \beta_{\bf n}^{\frac{d+2}{2}} \prod_{k=1}^d  ( n_k)^{-4\mu_k}+4\beta_{\bf n}^{-\mu_0} \Big\| H \Big\|^2_{\mathcal{H}^{\mu_0+1}(\Omega) },
						\end{align}
						which implies that  $	{\bf E}	\Big\| \widehat H_{\beta_{\bf n}} -H  \Big\|^2_{H^1(\Omega)}$
						is of order
					$
						\max \Bigg( \frac{   \beta_{\bf n}^{\frac{d+2}{2}} }{\prod_{k=1}^d  ( n_k)^{4\mu_k} }, ~\beta_{\bf n}^{-\mu_0}  \Bigg).
						$
						This together with Corollary  \ref{corollary2.1} yields
						\begin{align*}
						&e^{2T \rho_{\bf n}} {\bf E}\|\widehat H_{\beta_{\bf n}}-H\|^2_{H^1(\Omega)}+ \frac{16e^{2T \rho_{\bf n}} }{A_1-A_0} {\bf E} \lf\|\widehat G_{\beta_{\bf n}}- G\rt\|^2_{L^\infty(0,T;L^2(\Omega))}\nn\\
						&+\frac{16 TA_1^2}{A_1-A_0}  \rho_{\bf n}^{-2\gamma} \|\textbf{u}\|_{\kg}^2   +\frac{16}{A_1-A_0} \ep^2 e^{2T \rho_{\bf n}}   (T-t) \|{\bf u}\|^2_{L^\infty (0,T);H^{2}(\Omega))}
						\end{align*}	
						is of order
						\begin{equation}
						\max	\Bigg(  \frac{  e^{2\rho_{\bf n} T}   \beta_{\bf n}^{\frac{d+2}{2}} }{\prod_{k=1}^d  ( n_k)^{4\mu_k} }, e^{2\rho_{\bf n} T} \beta_{\bf n}^{-\mu_0} , \rho_{\bf n}^{-2\gamma}  \Bigg)+ \ep^2 e^{2\rho_{\bf n} T} . \label{important2}
						\end{equation}
						Combining \eqref{important1} and \eqref{important2} gives that 	$
						{\bf E}	\Big\| \widehat U^\ep_{\rho_{\bf n},\bn}-{\bf u} \Big\|_{H^1(\Omega)}
						$
						is of order
						\begin{equation}
							\exp \Big(  \frac{48T Q_{\bf n}^2}{A_1-A_0}  \Big)\max \Bigg( 	  \frac{  e^{2\rho_{\bf n} (T-t)}   \beta_{\bf n}^{\frac{d+2}{2}} }{\prod_{k=1}^d  ( n_k)^{4\mu_k} },   e^{-2\rho_{\bf n} t} \rho_{\bf n}^{-2\gamma}, e^{2\rho_{\bf n} (T-t)}  \bn^{-\mu_0} \Bigg)+	\exp \Big(  \frac{48T Q_{\bf n}^2}{A_1-A_0}  \Big) \ep^2 e^{2\rho_{\bf n} T}.
						\end{equation}
						This implies the desired result  \eqref{er555}.

						\end{proof}

				\end{document}